\newtheorem{thm}{Theorem}[section]
\newtheorem{cor}[thm]{Corollary}
\newtheorem{prop}[thm]{Proposition}
\newtheorem{lem}[thm]{Lemma}
\theoremstyle{definition}
\newtheorem{defn}[thm]{Definition}
\theoremstyle{remark}
\newcommand{\e}{\varepsilon}
\newcommand{\C}{\mathbb{C}}
\newcommand{\n}{{\bf n}}
\newcommand{\x}{{\bf x}}
\newcommand{\X}{{\bf X}}
\newcommand{\y}{{\bf y}}
\newcommand{\z}{{\bf z}}
\newcommand{\Z}{\mathbb{Z}}
\newcommand{\N}{\mathbb{N}}
\renewcommand{\P}{\mathbb{P}}
\renewcommand{\mod}{\text{ mod }}
\newcommand{\E}{\mathbb{E}}
\newcommand{\ch}{\mathbbm{1}}
\newcommand{\vertiii}[1]{{\left\vert\kern-0.25ex\left\vert\kern-0.25ex\left\vert #1 
		\right\vert\kern-0.25ex\right\vert\kern-0.25ex\right\vert}}
\begin{document}
	\title{A Dynamical Proof of the Prime Number Theorem}
	\author{Redmond McNamara}
	\maketitle
	\begin{abstract}
		We present a new, elementary, dynamical proof of the prime number theorem.
	\end{abstract}

	\section{Introduction}
	The prime number theorem states that
	\[
	\pi(N) = (1 + o_{N \rightarrow \infty}(1)) \frac{N}{\log N},
	\]
	where $\pi(N)$ denotes the number of primes of size at most $N$. 
	In some sense, the result was first publicly conjectured by Legendre in 1798 who suggested that
	\[
	\pi(N) = \frac{N}{A \log N + B + o_{N \rightarrow \infty}(1)},
	\]
	for some constants $A$ and $B$. Legendre specifically conjectured $A = 1$ and $B = -1.08366$. Gauss conjectured the same formula and stated he was not sure what the constant $B$ might turn out to be. Gauss' conjecture was based on millions of painstaking calculations first obtained in 1792 and 1793 which were never published but nonetheless predate Legendre's work on the subject. It is worth noting that later in his 1849 letter to Encke Gauss conjectured that $\pi(N) \approx \text{Li}(N)$, which in particular implies the correct values for $A$ and $B$. The first major breakthrough on the problem was due to Chebyshev who showed that
	\[
	c + o_{N \rightarrow \infty}(1) \leq \frac{ \pi(N)  \log N}{N} \leq C + o_{N \rightarrow \infty}(1)
	\]
	for some explicit constants $c$ and $C$ with $c > 0$. There is a long history of improvements to these explicit constants for which we refer to Goldstein \cite{Goldstein} and Goldfeld \cite{Goldfeld}.
	The prime number theorem was important motivation for Riemann's seminal work on the zeta function.
	
	The first proofs of the prime number theorem were given independently by Hadamard and de la Vall\'ee Poussin in 1896. The key step in their proof is a difficult argument showing that the Riemann zeta function does not have a zero on the line Re$(z) = 1$. Their proof was later substantially simplified by many mathematicians.
	In 1930,  Wiener found a ``Fourier analytic'' proof of the prime number theorem.
	In 1949, Erd\"os \cite{Erdos} and Selberg \cite{Selberg} discovered an elementary proof of the prime number theorem, where here elementary is used in the technical sense that the proof involves no complex analysis and does not necessarily mean that the proof is easy reading. The bitter battle over credit for this result is the subject of an informative note by Goldfeld \cite{Goldfeld}. 
	Other proofs are due to Daboussi \cite{Daboussi} and Hildebrand \cite{Hildebrand}. 
	In a blog post from 2014, Tao proves the prime number theorem using the theory of Banach algebras \cite{taoblog3}. A published version of this theorem can be found in a book by Einsiedler and Ward \cite{EW}.
	In an unpublished book from 2014, Granville and Soundarajan prove the prime number theorem using pretentious methods (see, for instance, \cite{GHS}).
	A note by Zagier \cite{zagier} from 1997 contains perhaps the quickest proof of the prime number theorem using a tauberian argument in the spirit of the Erd\"os-Selberg proof combined with complex analysis in the form of Cauchy's theorem. Zagier attributes this proof to Newman.
	
	The goal of this note is to present a new proof of the prime number theorem. Florian Richter and I discovered similar proofs concurrently and independently. His proof can be found in \cite{florian}. Terence Tao wrote up a version of this argument on his blog following personal communication from the author which can be found in \cite{taoblog2}.
	
	The proof proceeds as follows. To prove the prime number theorem, it suffices to prove that
	\[
	\frac{1}{N} \sum_{n \leq N} \Lambda(n) = 1 + o(1),
	\]
	where $\Lambda(n)$ is the von Mangoldt function which is $\log p$ if $n$ is a power of a prime $p$ and $0$ otherwise. The reader may think of $\Lambda$ as the normalized indicator function of the primes. The von Mangoldt function is related to the M\"obius function via the  formula
	\[
	\Lambda = \mu * \log,
	\]
	where the M\"obius function $\mu(n)$ is $0$ if $n$ has a repeated factor, $-1$ if $n$ has an odd number of distinct prime factors, $+1$ if $n$ has an even number of distinct prime factors. This formula, sometimes called the M\"obius inversion formula, encodes the fundamental theorem of arithmetic. Thus, there is a dictionary between properties of the von Mangoldt function $\Lambda$ and the M\"obius function $\mu$. Landau observed that cancellation in the M\"obius function is equivalent to the prime number theorem i.e. the prime number theorem is equivalent to the statement
	\[
	\frac{1}{N} \sum_{n \leq N} \mu(n) = o_{N \rightarrow \infty}(1).
	\]
	This is what we actually try to prove.
	
	The next observation is that, if one wants to compute a sum, it suffices to sample only a small number of terms. Typically (for instance for an i.i.d. randomly chosen sequence) the average value
	\begin{align*}
	&\frac{1}{N} \sum_{n \leq N} a(n)
	\intertext{
		is approximately the same as the average over only the even terms
	}
	\approx &\frac{2}{N} \sum_{n \leq N} a(n) \ch_{2 | n}.
	\end{align*}
	However, for certain sequences, like $a = (-1, +1, -1, +1,\ldots)$, the averages do not agree. Still for this sequence, if we instead sample every third point or every fifth point or every $p^{th}$ point for any other prime then the averages are approximately equal. It turns out, this is a rather general phenomenon: for any sequence, for most primes $p$, the average of the sequence is the same as the average along only those numbers divisible by $p$.
	
	Applying this to the M\"obius function, for each $N$, for most primes $p$
	\[
	\frac{1}{N} \sum_{n \leq N} \mu(n) \approx \frac{1}{N} \sum_{n \leq N} \mu(n) p \ch_{p | n}.
	\]
	For the purposes of this introduction, we will ``cheat" and pretend that this equation is true for any prime $p$. By changing variables
	\[
	\frac{1}{N} \sum_{n \leq N} \mu(n) p \ch_{p | n} = \frac{p}{N} \sum_{n \leq N/p} \mu(pn).
	\]
	But $\mu(pn) = -\mu(n)$ for most numbers $n$ since $\mu$ is multiplicative. Combining the last two equations gives
	\[
	\frac{1}{N} \sum_{n \leq N} \mu(n) \approx -\frac{p}{N} \sum_{n \leq N/p} \mu(n).
	\]
	The plan is to use this identity three times. Suppose we can find primes $p_1$, $p_2$ and $p$ such that $\frac{p_1p_2}{p} \approx 1$. Then by applying the previous identity
	\[
	\frac{1}{N} \sum_{n \leq N} \mu(n) \approx -\frac{p}{N} \sum_{n \leq N/p} \mu(n)
	\]
	and also
	\begin{align*}
	\frac{1}{N} \sum_{n \leq N} \mu(n) \approx & -\frac{p_1}{N} \sum_{n \leq N/p_1} \mu(n) \\
	\approx & +\frac{p_1 p_2}{N} \sum_{n \leq N/p_1 p_2} \mu(n).
	\end{align*}
	But since $\frac{p_1 p_2}{p} \approx 1$, we know that

\[
	\frac{p_1 p_2}{N} \sum_{n \leq N/p_1 p_2} \mu(n) \approx \frac{p}{N} \sum_{n \leq N/p} \mu(n).
	\]
	Putting everything together we conclude that
	\begin{align*}
	\frac{1}{N} \sum_{n \leq N} \mu(n) \approx & -\frac{1}{N} \sum_{n \leq N} \mu(n) 
	\intertext{
		which implies
	}
	\frac{1}{N} \sum_{n \leq N} \mu(n) \approx & \ 0.
	\end{align*}
	This implies the prime number theorem.
	
	Thus, the main difficulty in the proof is finding primes $p$, $p_1$ and $p_2$ lying outside some exceptional set for which $\frac{p_1p_2}{p} \approx 1$. We give a quick sketch of the argument. The Selberg symmetry formula (Theorem \ref{SelbergSymmetryFormula}) roughly tells us that, even if we do not know how many 
	primes there are at a certain scale (say in the interval from $x$ to $x(1+\e)$) and we do not know how many semiprimes (products of two primes) there are at that scale, the weighted sum of the number of primes and semiprimes is as we would expect. In particular, if there are no semiprimes between $x$ and $x(1+\e)$ there are twice as many primes as one would expect (meaning $2 \cdot \e \frac{x}{\log x}$ many primes). Let $x$ be a large number. If there are both primes and semiprimes between $x$ and $x(1+\e)$ then we can find $p$, $p_1$ and $p_2$ such that $\frac{p_1p_2}{p} \approx 1 + O(\e)$ and we are done. Thus, assume that there are either only primes or only semiprimes in the interval $[x, x(1+\e)]$. For the sake of our exposition, we will assume there are only primes between $x$ and $x(1+\e)$. By the Selberg symmetry formula, there are twice as many primes in this interval as expected. Now if there is a semiprime $p_1 p_2$ in the interval $[x(1+\e), x(1+\e)^2]$ then picking any prime $p$ in the interval $[x, x(1+\e)]$ we conclude that there exists $p$, $p_1$ and $p_2$ such that 
	$\frac{p_1p_2}{p} \approx 1 + O(\e)$. Thus, either we win (and the prime number theorem is true) or there are again twice as many primes in the interval $[x(1+\e), x(1+\e)^2]$ as one would expect. Running this argument again shows that there are again only primes and no semiprimes in the interval $[x(1+\e)^2, x(1+\e)^3]$. Iterating this argument using the connectedness of the interval, we find large intervals $[x, 100x]$ where there are twice as many primes as predicted by the prime number theorem. But this contradicts Chebyshev's theorem: Chebyshev's theorem gives a lower bound on the number of primes, which in turn gives a lower bound on the number of semiprimes; alternately, we remark that one could use Erd\"os's version of Chebyshev's theorem that the number of primes less than $x$ is at most $\log 4 \frac{x}{\log x}$ and because $\log 4 < 2$ this gives a contradiction. This completes the proof.
	
	\subsection{A comment on notation}
	Throughout this paper, we will use asymptotic notation. Since number theory, dynamics and analysis sometime use different conventions, we take a moment here to fix notation. We will write
	\begin{align*}
	x =& O(y)
	\intertext{to mean that there exists a constant $C$ such that}
	|x| \leq & C y.
	\intertext{When we adorn these symbols with subscripts, the subscripts specify which variables the constants are allowed to depend on. Thus}
	x =& O_{A,B}(y)
	\intertext{means that there exists a constant $C$ which is allowed to depend on $A$ and $B$ such that}
	|x| \leq & C y.
	\intertext{We write}
	x = & y + O(z)
	\intertext{to mean that}
	x - y = & O(z).
	\intertext{We also adopt little $o$ notation:}
	x = &o_{n \rightarrow \infty}(y)
	\intertext{means that
	\[
	\lim_{n \rightarrow \infty} \frac{x}{y} = 0.
	\]
	Occasionally, when the variable with respect to which the limit is being taken is clear from context, we may simply write
	}
	x = &o(y).
	\intertext{	As before, we write
	}
	x = & y + o_{n \rightarrow \infty}(z)
	\intertext{to mean}
	x - y = & o_{n \rightarrow \infty}(z).
	\intertext{If the expression $x$ depends on more than one variables, say $n$, $m$ and $k$, we may use subscripts to make explicit that the rate of convergence implicit in the little $o$ notation is allowed to depend on more variables. Thus,}
	x = &o_{n \rightarrow \infty, m, k}(y)
	\end{align*}
	means that $\frac{x}{y}$ tends to zero with $n$ at a rate which may depend on $m$ and $k$.
	
	\subsection{Acknowledgments}
	I would like to thank Florian Richter for his patience and conversation. I would also like to thank Terence Tao for including this proof in his class on number theory and for many helpful discussions. I would like to thank Gergley Harcos and Joni Ter\"av\"ainen for comments on an earlier draft. I would also like to thank Tim Austin, Will Baker, Bjorn Bringmann, Asgar Jamneshan, Gyu Eun Lee, Adam Lott, Clark Lyons, Bar Roytman, Chris Shriver and Will Swartworth for helpful conversations.
	
	\section{Proof of the prime number theorem}
	
	From number theory, we will use Mertens' Theorem, in particular the version which states, 
	\[
	\sum_{p \leq x} \frac{1}{p} = \log \log x + M + O\left( \frac{1}{\log x} \right)
	\]
	for some constant M; we will also use Chebyshev's Theorem, the Selberg Symmetry Formula, Landau's formulation of the prime number theorem (i.e. that the prime number theorem is equivalent to $\sum_{n \leq N} \mu(n) = o_{N \rightarrow \infty}(N)$) and a slightly modified version of the Tur\'an-Kubilius inequality which we will prove using the following Bombieri-Halász-Montgomery inequality.
	\begin{prop}[Bombieri-Halász-Montgomery inequality \cite{MR286773}]\label{pregood}
		Let $w_i$ be a sequence of nonnegative real numbers. Let $u$ and $v_i$ be vectors in a Hilbert space. Then
		\[
		\sum_{i = 1}^n w_i | \langle u, v_i \rangle |^2 \leq || u ||^2 \cdot \left( \sup_i \sum_{j = 1}^n w_j |\langle v_i, v_j \rangle| \right).
		\]
	\end{prop}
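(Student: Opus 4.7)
The plan is to apply Cauchy--Schwarz to introduce $\|u\|$, unfold the resulting norm as a double sum indexed by pairs $(i,j)$, and then control that double sum by the symmetrization inequality $2|\alpha_i||\alpha_j| \leq |\alpha_i|^2 + |\alpha_j|^2$. Write $\alpha_i := \langle u, v_i\rangle$ and $L := \sum_i w_i |\alpha_i|^2$, and let $S := \sup_i \sum_{j=1}^n w_j |\langle v_i, v_j\rangle|$ denote the quantity in parentheses on the right-hand side.

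First I would rewrite
\[
L = \sum_i w_i \overline{\alpha_i} \alpha_i = \sum_i w_i \overline{\alpha_i} \langle u, v_i \rangle = \left\langle u, \, \sum_i w_i \alpha_i v_i \right\rangle,
\]
so that Cauchy--Schwarz in the Hilbert space yields
\[
L \leq \|u\| \cdot \left\| \sum_i w_i \alpha_i v_i \right\|.
\]
Next I would square and expand the norm on the right as
\[
\left\| \sum_i w_i \alpha_i v_i \right\|^2 = \sum_{i,j} w_i w_j \, \alpha_i \overline{\alpha_j} \, \langle v_i, v_j \rangle \leq \sum_{i,j} w_i w_j |\alpha_i| |\alpha_j| |\langle v_i, v_j\rangle|.
\]
Then I would apply the elementary bound $|\alpha_i||\alpha_j| \leq \tfrac12(|\alpha_i|^2 + |\alpha_j|^2)$ together with the symmetry of the resulting double sum in $i$ and $j$ to replace $|\alpha_i||\alpha_j|$ by $|\alpha_i|^2$, obtaining
\[
\left\| \sum_i w_i \alpha_i v_i \right\|^2 \leq \sum_i w_i |\alpha_i|^2 \sum_j w_j |\langle v_i, v_j\rangle| \leq S \cdot L.
\]

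Combining the two displays gives $L \leq \|u\| \sqrt{SL}$, hence $L^2 \leq \|u\|^2 S L$, and dividing through by $L$ (the inequality being trivial when $L=0$) yields $L \leq \|u\|^2 \cdot S$, which is exactly the claim. There is no real obstacle here; the only subtle point is making sure to symmetrize the double sum before bounding $|\alpha_i||\alpha_j|$, since otherwise one would end up with a quantity of the form $\tfrac12\sup_i \sum_j w_j|\langle v_i,v_j\rangle| + \tfrac12 \sup_j \sum_i w_i|\langle v_i,v_j\rangle|$, which is the same by symmetry of $|\langle v_i,v_j\rangle|$ but requires this symmetry to collapse into the stated $\sup_i$.
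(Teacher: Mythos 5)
Your proof is correct and follows essentially the same route as the paper: express the sum as an inner product of $u$ with a weighted combination of the $v_i$, apply Cauchy--Schwarz, expand the square, symmetrize with $|\alpha_i||\alpha_j| \leq \tfrac12(|\alpha_i|^2+|\alpha_j|^2)$, and pull out the supremum. The only cosmetic difference is that the paper normalizes the coefficients in advance via a duality choice ($\sum_i w_i|c_i|^2 = 1$), whereas you use the coefficients $\langle u, v_i\rangle$ directly and finish by dividing through by $L$, which is harmless since the case $L=0$ is trivial.
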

	\begin{proof}
		By duality, there exists $c_i$ such that
		\[
		\sum_{i = 1}^n w_i |c_i|^2 = 1
		\]
		and 
		\begin{align*}
		\sum_{i = 1}^n w_i | \langle u, v_i \rangle |^2  = & \left( \sum_{i = 1}^n w_i c_i \langle u, v_i \rangle \right)^2
		\intertext{and therefore by conjugate bilinearity of the inner product}
		= &  \left\langle u, \sum_{i = 1}^n w_i \overline{c}_i v_i \right\rangle^2.
		\intertext{By Cauchy-Schwarz, this is at most}
		\leq & || u ||^2 \bigg|\bigg| \sum_{i = 1}^n w_i \overline{c}_i v_i \bigg|\bigg|^2.
		\intertext{
			By the pythagorean theorem this is given by
		}
		= & || u ||^2 \sum_{i = 1}^n \sum_{j = 1}^{n} w_i w_j c_i \overline{c}_j \langle v_i, v_j \rangle.
		\intertext{
			The geometric mean is dominated by the arithmetic mean.
		}
		\leq & || u ||^2 \sum_{i = 1}^n \sum_{j = 1}^{n} w_i w_j \frac{1}{2}(|c_i| + |c_j|) | \langle v_i, v_j \rangle |.
		\intertext{
			By symmetry this is
		}	
		= & || u ||^2 \sum_{i = 1}^n  w_i |c_i|^2 \sum_{j = 1}^{n} w_j | \langle v_i, v_j \rangle |.
		\intertext{
			Because everything is nonnegative, we may replace the inner term with a supremum
		}
		\leq  & || u ||^2 \sum_{i = 1}^n  w_i |c_i|^2 \sup_k \sum_{j = 1}^{n} w_j | \langle v_k, v_j \rangle |.
		\end{align*}
		Using that $\sum w_i |c_i|^2 = 1$ completes the proof.
	\end{proof}
	The next proposition applies the previous proposition in order to show that, for any bounded sequence, the average of the sequence is the same as the average over the $p^{th}$ terms in the sequence for most prime $p$.
	\begin{prop}[Tur\'an-Kubilius \cite{MR0160745}]\label{good}
		Let $S$ denote a set of primes less than some natural number $P$. Let $N$ be a natural number which is at least $P^3$. 
		Let $f$ be a 1-bounded function from $\N$ to $\C$.	
		Then
		\[
		\sum_{p \in S} \frac{1}{p} \left| \frac{1}{N} \sum_{n \leq N} f(n) (1 - p \ch_{p|n} ) \right|^2\ = O\left( 1 \right).
		\]
	\end{prop}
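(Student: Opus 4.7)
The plan is to apply the Bombieri-Halász-Montgomery inequality (Proposition \ref{pregood}) with a carefully chosen Hilbert space, vectors, and weights. I would take the Hilbert space to be $\C^N$ equipped with the normalized inner product $\langle a, b\rangle = \frac{1}{N} \sum_{n \leq N} a(n)\overline{b(n)}$, set $u = f$ regarded as a vector in this space, enumerate $S = \{p_1, \ldots, p_k\}$, and define the vectors $v_i(n) = 1 - p_i \ch_{p_i \mid n}$ with weights $w_i = 1/p_i$. Then the left-hand side of the proposition is exactly $\sum_i w_i |\langle u, v_i\rangle|^2$, and since $f$ is $1$-bounded we have $\|u\|^2 \leq 1$. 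So it suffices to show that
\[
\sup_i \sum_{j} \frac{1}{p_j} \bigl| \langle v_i, v_j \rangle \bigr| = O(1).
\]

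Next I would compute these inner products explicitly. Expanding $(1 - p\ch_{p\mid n})(1 - q\ch_{q\mid n})$ and using that $\ch_{p\mid n}\ch_{q\mid n} = \ch_{pq\mid n}$ for distinct primes $p \neq q$, together with the elementary estimate $\frac{1}{N}\sum_{n \leq N} m \ch_{m\mid n} = 1 + O(m/N)$, one finds that
\[
\langle v_p, v_q \rangle = O(pq/N) \quad \text{for } p \neq q,
\]
while the diagonal term is $\langle v_p, v_p \rangle = p - 1 + O(p^2/N)$.

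Putting these estimates together, the diagonal contribution to $\sum_j \frac{1}{p_j}|\langle v_i, v_j\rangle|$ is $\frac{1}{p_i}(p_i - 1) + O(p_i/N) = O(1)$, and the off-diagonal contribution is bounded by
\[
\sum_{j \neq i} \frac{1}{p_j} \cdot O(p_i p_j/N) = O\!\left( \frac{p_i \cdot \#S}{N} \right) = O\!\left( \frac{P^2}{N} \right),
\]
which is $O(1)$ (in fact $o(1)$) using the hypothesis $N \geq P^3$. The combination of these two bounds gives the required supremum bound, and the Bombieri-Halász-Montgomery inequality then yields the claim.

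The only real obstacle is bookkeeping the diagonal-versus-off-diagonal split and making sure the hypothesis $N \geq P^3$ (or really, $N$ large compared to $P^2$) is used in exactly the right place; everything else is a direct computation, and no deep input is needed beyond Proposition \ref{pregood} and the trivial equidistribution of multiples of $m$ in $[1,N]$.
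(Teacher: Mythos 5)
Your proposal is correct and is essentially the paper's own proof: the same application of Proposition \ref{pregood} with $u=f$, $v_p = (n \mapsto 1 - p\ch_{p\mid n})$, $w_p = 1/p$ in $L^2$ of $\{1,\dots,N\}$ with normalized counting measure, followed by the same elementary evaluation of the inner products and the diagonal/off-diagonal split, with $N \geq P^3$ absorbing the off-diagonal error. Your bookkeeping is in fact marginally sharper (you keep the factor $1/p_j$ and get $O(P^2/N)$ off the diagonal, where the paper settles for $O(P^3/N)$), but this is a cosmetic difference, not a different argument.
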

	\begin{proof}
		We will apply Proposition \ref{pregood}: our Hilbert space is $L^2$ on the space of function on the integers $\{1, \ldots, N\}$ equipped with normalized counting measure; set $w_p = \frac{1}{p}$; set $v_p = (n \mapsto 1 - p \ch_{p|n})$ and $u = f$; thus, by Proposition \ref{pregood}
		\begin{align}
		& \sum_{p \in S} \frac{1}{p} \left| \frac{1}{N} \sum_{n \leq N} f(n) (1 - p \ch_{p|n} ) \right|^2 \nonumber
		\\ \leq & \frac{1}{N} \sum_{n \leq N} |f(n)|^2 \cdot \sup_{p \in S} \sum_{q \in S} \frac{1}{q} \left| \frac{1}{N} \sum_{n \leq N} (1 - p \ch_{p|n} ) (1 - q \ch_{q|n} ) \right|. \nonumber
		\intertext{
			Since $f$ is 1-bounded, we may bound the $L^2$ norm of $f$ by 1. Thus,}
		\leq & \sup_{p \in S} \sum_{q \in S} \frac{1}{q} \left| \frac{1}{N} \sum_{n \leq N} (1 - p \ch_{p|n} ) (1 - q \ch_{q|n} ) \right|. \label{eq1}
		\end{align}
		For primes $p$ and $q$, 
			\[
			\frac{1}{N} \sum_{n \leq N} (1 - p \ch_{p|n} ) (1 - q \ch_{q|n} )
			\]
			can be expanded into a signed sum of four terms
			\[
			\frac{1}{N} \sum_{n \leq N} 1 - p \ch_{p|n} - q \ch_{q|n} + p q \ch_{p | n} \ch_{q | n}.
			\]
			When $p \neq q$, we claim that each term is $1 + O\left( \frac{P^2}{N} \right)$. The trickiest term is the last term
			\[
			\frac{1}{N} \sum_{n \leq N} p q \ch_{p | n} \ch_{q | n}.
			\]
			When $p \neq q$, we have that
			\[
			\ch_{p | n} \ch_{q | n} = \ch_{p q | n}.
			\]
			Of course, for any natural number $m$,
			\[
			\# \text{ of $n \leq N$ such that $m$ divides $n$ } = \frac{N}{m} + O(1),
			\]
			where the $O(1)$ term comes from the fact that $m$ need not perfectly divide $N$. Thus,
			\[
			\frac{1}{N} \sum_{n \leq N} p q \ch_{p | n} \ch_{q | n} = p q \left( \frac{1}{p q} + O\left( \frac{1}{N} \right) \right),
			\]
			which is $1 + O\left( \frac{P^2}{N} \right)$ as claimed. A similar argument handles the three other terms. Altogether, we conclude that 
						\[
			\frac{1}{N} \sum_{n \leq N} (1 - p \ch_{p|n} ) (1 - q \ch_{q|n} ) = O\left( \frac{P^2}{N} \right),
			\]
			when $p \neq q$. Inserting this bound into \ref{eq1} and remembering that there are at most $P$ terms in the sum over $q$ in $S$, we find
		\begin{align*}
		& \sum_{p \in S} \frac{1}{p} \left| \frac{1}{N} \sum_{n \leq N} f(n) (1 - p \ch_{p|n} ) \right|^2 
		\\ \leq & \sup_{p \in S} \sum_{q \in S} \frac{1}{q} \left| \frac{1}{N} \sum_{n \leq N} (1 - p \ch_{p|n} ) (1 - q \ch_{q|n} ) \right|
		\\ \leq & \sup_{p \in S} \frac{1}{p} \left| \frac{1}{N} \sum_{n \leq N} (1 - p \ch_{p|n} ) (1 - p \ch_{p|n} ) \right| + O\left( \frac{P^3}{N} \right)
		\intertext{Expanding out the product, the main term is}
		& \sup_{p \in S} \frac{1}{p} \left| \frac{1}{N} \sum_{n \leq N} p^2 \ch_{p|n} \right|.
		\intertext{
		By the same trick as before, we may replace the average of $\ch_{p | n}$ by $\frac{1}{p}$ plus a small error dominated by the main term. Cancelling factors of $p$ as appropriate, we are left with
		}
		= & \sup_{p \in S} \frac{1}{p} \left| \frac{1}{N} \sum_{n \leq N} p^2 \frac{1}{p} \right| = O(1).
		\end{align*}
		Of course, all the smaller terms can be bounded by the triangle inequality.
		This completes the proof.
	\end{proof}
Note that,
	\[
	\sum_{p \in S} \frac{1}{p} \frac{1}{N} \sum_{n \leq N} 1 = \sum_{p \in S} \frac{1}{p}.
	\]
	For instance, if $S$ is the set of all primes less than $P$, Euler proved that
	\[
	\sum_{p \leq P} \frac{1}{p} \rightarrow \infty
	\]
	as $P$ tends to infinity. In fact, Mertens' theorem states that this sum is approximately $\log \log P$. Thus, Proposition \ref{good} represents a real improvement over the trivial bound. Therefore, for $S$, $P$, $N$ and $f$ as in the statement of Proposition \ref{good}
\[ \left| \frac{1}{N} \sum_{n \leq N} f(n) (1 - p \ch_{p|n} ) \right|^2 \]
	is small for ``most" primes. This shows that most primes are ``good" in the sense that
	\[
	\frac{1}{N} \sum_{n \leq N} f(n) \approx \frac{1}{N} \sum_{n \leq N} f(n) p \ch_{p|n}
	\]
	This notion is captured in the following definition.
	\begin{defn}\label{defofgood}
		Let $\e$ be a positive real number, let $P$ be a natural number which is sufficiently large depending on $\e$ and let $N$ be a natural number sufficiently large depending on $P$. Denote by $\ell(N)$ the quantity
		\[
		\ell(N) = \sum_{n \leq N} \frac{1}{n}.
		\]
		Denote by $S(N)$ the set of primes $p \leq P$ such that
		\[
		\frac{1}{N} \left| \sum_{n \leq N} \mu(n) - \sum_{n \leq N} \mu(n) p \ch_{p | n} \right| \geq \e.
		\]
		Then we say a prime $p$ is good if
		\[
		\frac{1}{\ell(N)} \sum_{n \leq N} \frac{1}{n} \ch_{p \in S(n)} \leq \e.
		\]
		Otherwise, we say $p$ is bad.
	\end{defn}
From Proposition \ref{good}, we obtain the following corollary.
	\begin{cor}\label{notmany}
		Let $\e$ be a positive real number, let $P$ be a natural number which is sufficiently large depending on $\e$ and let $N$ be a natural number sufficiently large depending on $P$. Then the set of bad primes is small in the sense that
		\[
		\sum_{p \text{ bad } \leq P} \frac{1}{p} = O(\e^{-3}).
		\]
	\end{cor}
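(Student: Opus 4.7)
The plan is to apply Proposition \ref{good} not just at scale $N$ but at every scale $n \leq N$ (with $n$ large enough compared to $P$), and then convert the resulting $L^2$ bound into pointwise information about the bad primes by means of a double application of Markov's inequality.

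First, I would fix $n$ with $n \geq P^3$ and apply Proposition \ref{good} to the Möbius function (which is $1$-bounded) with $N$ replaced by $n$ and $S$ the set of all primes $\leq P$, obtaining
\[
\sum_{p \leq P} \frac{1}{p} \left| \frac{1}{n} \sum_{m \leq n} \mu(m) (1 - p \ch_{p|m}) \right|^2 = O(1).
\]
By the definition of $S(n)$, each prime $p \in S(n)$ contributes at least $\e^2/p$ to the left-hand side, so $\sum_{p \in S(n), p \leq P} \frac{1}{p} = O(\e^{-2})$ for every $n \geq P^3$.

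Next, I would weight by $\frac{1}{n}$ and sum over $n \leq N$. Swapping the order of summation and splitting off the contribution of the indices $n < P^3$ (where I use only the trivial bound $\sum_{p \in S(n), p\leq P} \frac{1}{p} \leq \sum_{p \leq P} \frac{1}{p} = O(\log\log P)$, and where the total weight $\sum_{n < P^3} \frac{1}{n} = O(\log P)$ is negligible compared to $\ell(N)$ once $N$ is taken sufficiently large depending on $P$), I arrive at
\[
\sum_{p \leq P} \frac{1}{p} \cdot \sum_{n \leq N} \frac{1}{n} \ch_{p \in S(n)} \;=\; O\!\left(\ell(N)\, \e^{-2}\right).
\]
Finally, by the definition of a bad prime, each bad $p$ contributes at least $\frac{1}{p}\cdot \e\,\ell(N)$ to the left-hand side. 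Dividing through by $\e\,\ell(N)$ yields the claimed bound $\sum_{p \text{ bad } \leq P} \frac{1}{p} = O(\e^{-3})$.

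The argument is essentially two Markov inequalities stacked on top of each other, so there is no single hard step; the only thing to check with care is the small-$n$ contribution, which is why the hypothesis that $N$ is sufficiently large depending on $P$ is needed (and why Proposition \ref{good} requires $n \gtrsim P^3$ at each scale). Once that is handled the constants collapse to exactly $\e^{-3}$ as stated.
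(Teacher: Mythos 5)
Your proof is correct and follows essentially the same route as the paper: apply Proposition \ref{good} at each scale $n$, use Markov's inequality to get $\sum_{p \in S(n),\, p \leq P} \frac{1}{p} = O(\e^{-2})$, average over $n \leq N$ with logarithmic weights, and apply Markov again via the definition of a bad prime to obtain the $O(\e^{-3})$ bound. The only difference is cosmetic: you spell out the small-$n$ contribution explicitly, whereas the paper absorbs it into an $o_{N \rightarrow \infty, P}(1)$ error term.
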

	\begin{proof}
		By Proposition \ref{good}, for each $n$ sufficiently large,
		\begin{align*}
		\sum_{p \leq P }\frac{1}{p} \ch_{p \not\in S(n)}  =& \ O(\e^{-2}).
		\intertext{
			Summing in $n$ gives,
		}
		\sum_{p \leq P } \frac{1}{p} \frac{1}{\ell(N)} \sum_{n \leq N} \frac{1}{n} \ch_{p \not\in S(n)}  =& \ O(\e^{-2}) + o_{N \rightarrow \infty, P}(1).
		\intertext{
			We remark that for $N$ sufficiently large depending on $P$, this second error term may be absorbed into the first term. By definition, the set of bad primes is the set of primes such that
		}
		\frac{1}{\ell(N)} \sum_{n \leq N} \frac{1}{n} \ch_{p \not\in S(n)} \geq& \ \e.
		\end{align*}
		But then by Chebyshev's inequality (i.e. not his theorem on counting primes),
		\[
		\sum_{p \text{ bad } \leq P} \frac{1}{p} = O(\e^{-3}).
		\]
		as desired.
	\end{proof}

	Next, we turn to the Selberg symmetry formula.
		To state Selberg's symmetry formula, we need to introduce the following function.
		Let $\Lambda_2 = \log \cdot \Lambda + \Lambda * \Lambda$ i.e.
		\[
		\Lambda_2(n) = \log(n) \Lambda(n) + \sum_{d | n} \Lambda(d) \Lambda\left( \frac{n}{d}\right),
		\]
		where the von Mangoldt function $\Lambda(n)$ when $\log p$ is $n$ is a power of a prime $p$ and $0$ otherwise. Thus, we remark that $\Lambda_2$ is supported on prime powers and products of two prime powers. It is not too hard to show that $\Lambda_2$ is ``mostly" supported on primes and semiprimes. Recall that the prime number theorem is the statement that
		\[
		\frac{1}{N} \sum_{n \leq N} \Lambda(n) = 1 + o_{N \rightarrow \infty}(1)
		\]
		and thus
		\[
		\frac{1}{N} \sum_{n \leq N} \Lambda(n) \log n = \log(N) (1 + o_{N \rightarrow \infty}(1)).
		\]
		We are now ready to state the Selberg symmetry formula.
		\begin{thm}[Selberg symmetry formula]\label{SelbergSymmetryFormula}
		The average of the second von Mangoldt function defined above is 
		\[
		\frac{1}{N} \sum_{n \leq N} \Lambda_2(n) = 2 \log N (1 + o_{N \rightarrow \infty}(1) ).
		\]
		\end{thm}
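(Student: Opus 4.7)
My plan is to prove Selberg's symmetry formula by deriving and exploiting the algebraic identity $\Lambda_2 = \mu * \log^2$ (Dirichlet convolution) together with Stirling's approximation and Chebyshev-Mertens-type inputs, while sidestepping a naive substitution that would be circular with the prime number theorem.

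I would first verify $\log^2 = \mathbf{1} * \Lambda_2$ (from which $\Lambda_2 = \mu * \log^2$ follows by Möbius inversion): expanding
\[
(\mathbf{1} * \Lambda_2)(n) = \sum_{d \mid n} \Lambda(d) \log d + \sum_{d \mid n} (\Lambda * \Lambda)(d),
\]
and using $\mathbf{1} * \Lambda = \log$ to rewrite the second sum as $\sum_{d \mid n} \Lambda(d)\log(n/d)$, the total telescopes to $\sum_{d \mid n}\Lambda(d)(\log d + \log(n/d)) = \log n \cdot \sum_{d \mid n}\Lambda(d) = \log^2 n$. Swapping summation then yields
\[
\sum_{n \leq N} \Lambda_2(n) = \sum_{d \leq N} \mu(d) T(N/d), \qquad T(y) := \sum_{m \leq y} \log^2 m,
\]
with Euler-Maclaurin giving $T(y) = y\log^2 y - 2y\log y + 2y + O(\log^2 y)$. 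Stirling provides the parallel $L(y) := \sum_{m \leq y}\log m = y\log y - y + O(\log y)$, and $\mu*\log = \Lambda$ gives the companion $\sum_{d \leq N}\mu(d) L(N/d) = \psi(N)$; the trivial Möbius identity $\sum_{d \leq N}\mu(d)\lfloor N/d \rfloor = 1$ rounds out the family of matching formulas.

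The key step, and the main obstacle, is to extract the leading term $2N\log N$ while keeping errors $o(N\log N)$, since direct substitution of the asymptotic for $T$ produces sums like $\sum_{d \leq N}\mu(d)/d$ and $\sum_{d \leq N}\mu(d)\log(N/d)/d$ whose good bounds are themselves equivalent to the prime number theorem. My plan is to proceed via Dirichlet's hyperbola method: expand $(\log(N/d))^k = (\log N - \log d)^k$ inside $T(N/d)$ and regroup the resulting sums so that the contributions involving partial sums of $\mu$ cancel against analogous expansions of $\sum_{d \leq N}\mu(d) L(N/d)$ and $\sum_{d \leq N}\mu(d)\lfloor N/d \rfloor$, leaving only expressions controlled by the three companion identities above. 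Chebyshev's bound $\psi(N) = O(N)$, Mertens' estimate $\sum_{n \leq N}\Lambda(n)/n = \log N + O(1)$ (itself derivable from $\log = \mathbf{1}*\Lambda$ and Stirling), and the Stirling asymptotic $\sum_{n \leq N}\log^2 n = N\log^2 N - 2N\log N + 2N + O(\log^2 N)$ then together force the surviving terms to collapse. After matching the $y\log^2 y$ main term of $T(y)$ against the $N\log^2 N$ contribution from $\log^2 = \mathbf{1}*\Lambda_2$, the cancellation produces $\sum_{n \leq N}\Lambda_2(n) = 2N\log N + O(N)$, which is in fact stronger than the claimed asymptotic $2N\log N(1+o(1))$.
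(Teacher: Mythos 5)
You should note at the outset that the paper does not prove Theorem \ref{SelbergSymmetryFormula} at all: it refers the reader to Section 1 of \cite{taoblog3}. Your overall route is the same as that standard elementary argument: the verification of $\mathbf{1} * \Lambda_2 = \log^2$, hence $\Lambda_2 = \mu * \log^2$, the reduction to $\sum_{n \le N} \Lambda_2(n) = \sum_{d \le N} \mu(d) T(N/d)$, and the Euler--Maclaurin asymptotic $T(y) = y\log^2 y - 2y\log y + 2y + O(\log^2 y)$ are all correct, and you are right to be alert to circularity. One misstatement first: the bounds actually needed for $\sum_{d \le N} \mu(d)/d$ and $\sum_{d \le N} \tfrac{\mu(d)}{d}\log(N/d)$ are only $O(1)$, and these are elementary (the first from $|\sum_{d\le N}\mu(d)/d|\le 1$ or from $\sum_{d\le N}\mu(d)\lfloor N/d\rfloor = 1$, the second from your identity $\sum_{d\le N}\mu(d)L(N/d)=\psi(N)$ plus Chebyshev); it is only the refined statements such as $\sum_{d\le N}\mu(d)/d = o(1)$ that are PNT-equivalent. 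So the danger is not in using these sums, but in how you propose to generate the main term.

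That is where the genuine gap lies. Your plan to expand $(\log(N/d))^k = (\log N - \log d)^k$ and regroup against the companion identities cannot close with only Stirling, Chebyshev and Mertens as inputs: after expansion the coefficient of $N$ contains pieces like $\log^2 N \cdot \sum_{d\le N}\tfrac{\mu(d)}{d}$ and $\log N \cdot \sum_{d\le N}\tfrac{\mu(d)\log d}{d}$, and elementarily each such piece is only $O(N\log^2 N)$; exhibiting their cancellation to within $o(N\log N)$ without PNT forces you to recombine them into the unexpanded sum $\sum_{d\le N}\tfrac{\mu(d)}{d}\log^2(N/d)$. The entire theorem then rests on the single estimate $\sum_{d \le x} \tfrac{\mu(d)}{d} \log^2(x/d) = 2\log x + O(1)$, which nothing in your list supplies: your companion identities ($\psi(N)$ and $1$) live one logarithm too low, and the only identity of that shape at the $\log^2$ level is $\sum_{d\le x}\mu(d)T(x/d) = \sum_{n\le x}\Lambda_2(n)$, i.e.\ the quantity being computed. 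The missing ingredient --- and the place where the hyperbola method genuinely enters --- is a divisor-type evaluation: from $\mu * \tau = \mathbf{1}$ (with $\tau$ the divisor function) and $\sum_{m \le y} \tfrac{\tau(m)}{m} = \tfrac{1}{2}\log^2 y + 2\gamma \log y + O(1)$ one gets $\sum_{d m \le x} \tfrac{\mu(d)\tau(m)}{dm} = \sum_{n \le x} \tfrac{1}{n} = \log x + O(1)$, and combining this with the two $O(1)$ bounds above yields the key estimate; equivalently one can use $\sum_{n \le x} \tfrac{\log(x/n)}{n} \sum_{e \mid n} \mu(e) = \log x$ together with $\sum_{m \le y} \tfrac{\log(y/m)}{m} = \tfrac12 \log^2 y + \gamma \log y + c + O\bigl(\tfrac{\log y}{y}\bigr)$. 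Once that estimate is in hand, keeping $\log^2(N/d)$ intact in $T(N/d)$ and using the two $O(1)$ bounds does give $\sum_{n \le N}\Lambda_2(n) = 2N\log N + O(N)$ as you claim; without it, the asserted ``collapse'' of the surviving terms is unjustified.
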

	We will refer the reader to, for instance, \cite{taoblog3} section 1 for the proof. The next proposition says that, at each scale, there are either many primes or many semiprimes.
	
	\begin{prop}\label{either}
		Let $\e > 0$ be a sufficiently small number. Suppose that $k_0$ is sufficiently large depending on $\e$ and let $I_k$ denote the interval $[(1+\e)^k, (1+\e)^{k+1}]$. Then for every $k \geq k_0$,
		\[
		\sum_{p \in I_k} \frac{1}{p} \geq \frac{1}{k}
		\]
		or
		\[
		\sum_{ \substack{p_1 p_2 \in I_{k} \\ p_i \geq  \exp(\e^3 k)} } \frac{1}{p_1 p_2} \geq \frac{1}{k}.
		\]
	\end{prop}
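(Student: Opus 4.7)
The plan is to apply the Selberg symmetry formula on the dyadic-like interval $I_k$ and then upper bound $\sum_{n \in I_k}\Lambda_2(n)$ by the two reciprocal sums appearing in the conclusion of the proposition. First, I would subtract Theorem~\ref{SelbergSymmetryFormula} applied at $N = (1+\e)^{k+1}$ from itself applied at $N = (1+\e)^k$, and combine $\log(1+\e) = \e(1+O(\e))$ with the assumption $k_0 \gtrsim 1/\e$ (so that $k\e \gg 1$) to get
\[
\sum_{n \in I_k} \Lambda_2(n) = \bigl(2 + o_{k \to \infty, \e}(1)\bigr)\, \e^2 k (1+\e)^k.
\]
This is the main term we have to manufacture from primes and semiprimes on $I_k$.

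Next, expanding $\Lambda_2 = \Lambda \cdot \log + \Lambda * \Lambda$, I would note that $\Lambda_2$ is concentrated on primes $p$ (with $\Lambda_2(p) = (\log p)^2$) and squarefree semiprimes $p_1 p_2$ (with $\Lambda_2(p_1 p_2) = 2\log p_1 \log p_2$); the prime powers $p^a$ with $a \geq 2$ contribute only $O((1+\e)^{k/2} k^2 \e^2)$, which is negligible. On $I_k$ one has $\log n \leq (k+1)\log(1+\e) \leq 2k\e$, so $(\log p)^2 \leq 4k^2\e^2$ and $\log p_1 \log p_2 \leq 4k^2\e^2$. Using $p \geq (1+\e)^k$ (respectively $p_1 p_2 \geq (1+\e)^k$) to convert counts of primes and balanced semiprimes into the reciprocal sums multiplied by $(1+\e)^{k+1}$ gives inequalities of the form
\[
\sum_{p \in I_k} (\log p)^2 \lesssim k^2 \e^2 (1+\e)^{k} \sum_{p \in I_k} \frac{1}{p},
\]
and the analogous bound for the balanced semiprime piece.

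The main obstacle, and the only delicate step, is controlling the unbalanced semiprimes $p_1 p_2 \in I_k$ with $\min(p_1, p_2) < \exp(\e^3 k)$ as an error. The crude bound $\log p_1 \log p_2 \leq 4k^2\e^2$ per term is too weak. Instead, I would fix the small prime $p_1 < \exp(\e^3 k)$ and use Chebyshev's upper bound to estimate $\sum_{p_2: p_1 p_2 \in I_k} \log p_2 \lesssim \e (1+\e)^k/p_1$, then sum over $p_1$ using Mertens' theorem $\sum_{p < X}(\log p)/p = \log X + O(1)$:
\[
\sum_{\substack{p_1 p_2 \in I_k \\ p_1 < \exp(\e^3 k)}} \log p_1 \log p_2 \;\lesssim\; \e (1+\e)^k \sum_{p_1 < \exp(\e^3 k)} \frac{\log p_1}{p_1} \;\lesssim\; \e^4 k (1+\e)^k,
\]
which is a factor of $\e^2$ smaller than the Selberg main term and hence absorbable.

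Combining the three estimates with Selberg and dividing through by $k^2 \e^2 (1+\e)^k$ produces
\[
\sum_{p \in I_k} \frac{1}{p} + \sum_{\substack{p_1 p_2 \in I_k \\ p_i \geq \exp(\e^3 k)}} \frac{1}{p_1 p_2} \;\gtrsim\; \frac{1}{k}
\]
for $\e$ sufficiently small and $k$ sufficiently large. If both sums on the left were strictly less than $\e/k$, the left-hand side would be $O(\e/k)$, which for $\e$ small contradicts this lower bound. Therefore at least one of the two sums is at least $\e/k$, which is precisely the dichotomy claimed.
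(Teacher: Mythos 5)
Your route is the same as the paper's: difference the Selberg symmetry formula across $I_k$, discard the part of $\Lambda_2$ not supported on primes and balanced semiprimes as an error, convert the $\log$-weighted counts into the reciprocal sums using $p\asymp(1+\e)^k$, and pigeonhole; your handling of the unbalanced semiprimes (fix the small factor, sum $\log p_1/p_1$ by Mertens) is in fact tidier than the paper's. However, your decomposition of the support of $\Lambda_2$ has a genuine hole: besides primes, squarefree semiprimes and pure prime powers, $\Lambda*\Lambda$ also charges products of two prime powers $n=p^aq^b$ with $a\geq 2$ (e.g.\ $n=4q$), with $\Lambda_2(p^aq^b)=2\log p\log q$. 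These are not covered by your $O((1+\e)^{k/2}k^2\e^2)$ bound and they are not automatically negligible: counting, for fixed $p^a$, the admissible $q$ trivially by the number of integers in $[(1+\e)^k/p^a,(1+\e)^{k+1}/p^a]$ gives a total contribution of order $k\e^2(1+\e)^k\sum_{p,\,a\geq2}\log p\,p^{-a}$, a positive constant multiple of your main term $2\e^2k(1+\e)^k$, so this block must be estimated, as the paper does in its separate ``prime times prime power'' case. The fix is cheap and uses only tools you already invoke: for fixed $p^a$, $\sum_{q\colon p^aq\in I_k}\log q\leq\theta((1+\e)^{k+1}/p^a)\lesssim(1+\e)^k/p^a$ by Chebyshev, and $\sum_{p,\,a\geq2}\log p\,p^{-a}=O(1)$, so the whole block is $O((1+\e)^k)=o_{k\rightarrow\infty,\e}(\e^2k(1+\e)^k)$.

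A second, smaller point: your intermediate estimate $\sum_{p_2\colon p_1p_2\in I_k}\log p_2\lesssim\e(1+\e)^k/p_1$ does not follow from Chebyshev's upper bound alone; extracting the factor $\e$ on an interval of multiplicative width $1+\e$ needs a sieve input such as Brun--Titchmarsh. Fortunately you do not need that strength: the honest Chebyshev bound $\theta((1+\e)^{k+1}/p_1)\lesssim(1+\e)^k/p_1$ combined with Mertens gives $O(\e^3k(1+\e)^k)$ for the unbalanced semiprimes, still $O(\e)$ relative to the main term, so your absorption step survives with the weaker input. With these two repairs your argument is correct and is essentially the proof given in the paper.
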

	\begin{proof}
		This follows from the Selberg symmetry formula (Theorem \ref{SelbergSymmetryFormula}): after all, by the Selberg symmetry formula, for $k_0$ sufficiently large, for all $k \geq k_0$,
		\begin{align}
		\frac{1}{(1+\e)^k} \sum_{n \leq (1+\e)^k } \Lambda_2(n) = & 2 \log (1+\e)^k (1 + O(\e^2) ). \nonumber
		\intertext{
		The same holds for $k$ replaced by $k + 1$.
		}
		\frac{1}{(1+\e)^{k+1}} \sum_{n \leq (1+\e)^{k+1} } \Lambda_2(n) = & 2 \log (1+\e)^{k+1} (1 + O(\e^2) ). \nonumber
		\intertext{
		Taking differences, and using that $k \log (1+ \e)  = (k+1) \log(1+\e) (1 + O(\e^2))$, for $k \geq k_0$ sufficiently large, we find that
		}
		\frac{1}{\e(1+\e)^k} \sum_{n \in I_k } \Lambda_2(n) = & 2 \log (1+\e)^k (1 + O(\e) ). \label{sseq}
		\end{align}
		We aim to show that prime powers do not contribute very much to this sum. Notice that, if a prime power contributes to the sum, then the corresponding prime must be at most the square root of $(1+\e)^{k+1}$ and there is at most one power of any prime in the interval $I_k$ (because $\e < 1$). Also, notice that $\Lambda_2(p^a) \leq 2 \Lambda(p^a) \log p^a$. Thus, we bound
		\begin{align*}
		\frac{1}{\e(1+\e)^k} \sum_{ \substack{n = p^a, a > 1 \\ n \in I_k} } \Lambda(n) \log n = &
		\frac{1}{\e(1+\e)^k} \sum_{ \substack{n = p^a, a > 1 \\ n \in I_k} }  \log p \log p^a \\ 
		\leq& 	\frac{1}{\e(1+\e)^k} \sum_{p \leq (1+\e)^{(k+1)/2} } \log p \log (1+\e)^{k+1}.
		\intertext{
		Now the number of primes less than $(1+\e)^{(k+1)/2}$ is certainly less than $(1+\e)^{(k+1)/2}$, so
		}
		\leq& 	\frac{1}{\e(1+\e)^k} (1+\e)^{(k+1)/2} \log (1+\e)^{(k+1)/2} \log (1+\e)^{k+1}. \\
		=& o_{k \rightarrow \infty, \e}(1).
		\intertext{
		For instance, by choosing $k_0$ large depending on $\e$, we can make this quantity
		}
		=& O(\e)
		\intertext{Similarly for products of a natural number $m$ and a prime power,}
		\frac{1}{\e(1+\e)^k} \sum_{ \substack{n = p^am, a > 1 \\ n \in I_k} } \Lambda(p) \Lambda(m)  \leq &
		\frac{1}{\e(1+\e)^k} \sum_{ \substack{n = p^am, a > 1 \\ n \in I_k} }  \log p \log m \\
		\leq & \frac{1}{\e(1+\e)^k} \sum_{p \leq (1+\e)^{(k+1)/2} } \log p \sum_{1 < a \leq \log_p (1+\e)^k } \sum_{ m p^a  \in I_k    } \Lambda(m). 
		\intertext{
		Now the inner most sum is bounded by Chebyshev's inequality. We simplify slightly using the factor of  $\frac{1}{\e(1+\e)^k}$ out front.
		}
		\leq & C \sum_{p \leq (1+\e)^{(k+1)/2} } \log p \sum_{1 < a \leq \log_p (1+\e)^k } \frac{1}{p^a}. \\
		\leq & C \sum_{p \leq (1+\e)^{(k+1)/2} } \frac{\log p}{p^2} . \\
		=& O(1).
		\end{align*}
		Finally, we claim that when one of the prime factors of a semiprime is less than $\exp(\e^3 k)$ then that semiprime does not contribute very much to the sum. Indeed,
		\[
		\frac{1}{\e(1+\e)^k} \sum_{ \substack{p_1 p_2 \in I_k \\ p_1 \leq \exp(\e^3 k) } } \Lambda(p_1) \Lambda(p_2) =  \frac{\e}{(1+\e)^k} \sum_{ \substack{p_1 p_2 \in I_k \\ p_1 \leq \exp(\e^3 k) } } \log p_1 \log p_2.
		\]
		Now we use that $p_1$ is at most $\exp(\e^3 k)$ and $p_2$ is at most $(1+\e)^{k+1}$.
		\begin{align*}
		\leq & \frac{1}{\e(1+\e)^k} \sum_{ \substack{p_1 p_2 \in I_k \\ p_1 \leq \exp(\e^3 k) } } \e^3 k \cdot \log(1+\e)^{k+1}.
		\intertext{ 	
		Summing over scales,
		}
		\leq & \frac{1}{\e(1+\e)^k} \e^3 k \cdot \log(1+\e)^{k+1} \sum_{m \leq k \e^3} \sum_{ \substack{  m \leq \log p_1 \leq m - 1 \\ p_1 p_2 \in I_k } } 1.
		\intertext{
		The number of terms in the inner sum is can be estimated using Chebyshev's theorem. The outersum has roughly $k \e^3$ many terms. Thus, for some constant $C$,
		}
		\leq & C \frac{1}{\e(1+\e)^k} \cdot \e^3 k \log(1+\e)^{k+1} \cdot  \e^3 k \cdot \frac{\exp(\e^3 k)}{\e^3 k} \frac{(1+\e)^{k+1}}{\log(1+\e)^{k+1}} 
		\intertext{
		Simplifying, this is
		}
		= & O(\e^2 k) \\
		= & O(\e \cdot \log (1+\e)^k ).
		\end{align*}
		Altogether, we find that we can restrict \ref{sseq} to primes and semiprimes where neither factor is too small.
		\begin{align*}
		\frac{1}{\e(1+\e)^k} \sum_{ \substack{n \in I_k \\ n = p \text{ or } n = p_1p_2 \\ p_i \geq \exp(\e^3 k) } } \Lambda_2(n) = & 2 \log (1+\e)^k (1 + O(\e) ).
		\intertext{
		For any two numbers $n$ and $m$ in $I_k$, $\frac{1}{n} = \frac{1}{m} \cdot (1 + O(\e))$, so
	}
	\e^{-1} \sum_{ \substack{n \in I_k \\ n = p \text{ or } n = p_1p_2 \\ p_i \geq \exp(\e^3 k) } } \frac{\Lambda_2(n)}{n} = & 2 \log (1+\e)^k (1 + O(\e) ).
	\intertext{
	By the pigeonhole principle, either
	}
	\e^{-1} \sum_{ p \in I_k} \frac{\Lambda_2(p)}{p} \geq & \log (1+\e)^k (1 + O(\e) )
		\intertext{or}
	\e^{-1} \sum_{ \substack{p_1p_2 \in I_k \\ p_i \geq \exp(\e^3 k) } } \frac{\Lambda_2(p_1 p_2)}{p_1 p_2} \geq & \log (1+\e)^k (1 + O(\e) ).
	\intertext{
	In the first case, moving the $\e$ and $\log p \approx \log(1+\e)^k$ terms to the other side
	}
	\sum_{ p \in I_k} \frac{1}{p} \geq &  \frac{\e}{k \log(1+\e)} \cdot (1 + O(\e) ).
	\intertext{
	Taylor explanding the logarithm gives}
	\geq &  \frac{1}{k} \cdot (1 + O(\e) ),
	\intertext{
	 as desired. In the second case,
	}
	\e^{-1} \sum_{ \substack{p_1p_2 \in I_k \\ p_i \geq \exp(\e^3 k) } } \frac{1}{p_1 p_2} k^2 \log^2(1+\e) \geq & \log (1+\e)^k (1 + O(\e) ).
	\intertext{
	Rearranging terms gives
	}
	\sum_{ \substack{p_1p_2 \in I_k \\ p_i \geq \exp(\e^3 k) } } \frac{1}{p_1 p_2} \geq & \frac{\e}{k \log (1+\e)} (1 + O(\e) ).
	\end{align*}
	Taylor expanding the logarithm again completes the proof.
	\end{proof}
	Next, we show that we can actually find two nearby scales where both inequalities from Proposition \ref{either} hold. The key idea is to use the connectedness of the interval. 
	\begin{prop}\label{both}
		Let $\e > 0$ be a number sufficiently small. Suppose that $k_0$ is sufficiently large depending on $\e$ and let $I_k$ denote the interval $(1+\e)^k$ to $(1+\e)^{k+1}$. Then there exists $k$ and $k'$ such that $|k - k'| \leq 1$ with $k$ and $k'$ in $[k_0, \e^{-2} + k_0]$ and such that
		\[
		\sum_{p \in I_k} \frac{1}{p} \geq \frac{1}{2 k}
		\]
		and
		\[
		\sum_{ \substack{p_1 p_2 \in I_{k'} \\ p_i \geq  \exp(\e^3 k')} } \frac{1}{p_1 p_2} \geq \frac{1}{2 k'}
		\]
	\end{prop}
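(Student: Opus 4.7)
The plan is to argue by contradiction, using the discrete connectedness of the integer interval $[k_0, k_0 + \e^{-2}]$ to force one of the two alternatives in Proposition \ref{either} to hold at every $k$ in the range, and then to derive a contradiction from an amplified version of that alternative together with Mertens' theorem.

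Assume no admissible pair $(k, k')$ exists. If both the prime and semiprime bounds of Proposition \ref{either} happen to hold at some single $k$ in the range, then $(k,k)$ itself is admissible; so we may assume that at each $k$ in the range exactly one of the two bounds holds. Let $A$ and $B$ denote the sets of $k$ in the range where only the prime, respectively only the semiprime, bound holds, so that $A \cup B = [k_0, k_0 + \e^{-2}] \cap \Z$ and $A \cap B = \empty$. If both $A$ and $B$ are nonempty, discrete connectedness of the integer interval provides adjacent integers $k$ and $k+1$ lying in different parts, immediately producing an admissible pair and contradicting our assumption. Hence without loss of generality $A$ equals the entire range: the prime bound holds and the semiprime bound fails at every $k \in [k_0, k_0 + \e^{-2}]$.

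I would then amplify the prime bound by revisiting the Selberg calculation inside the proof of Proposition \ref{either}. The failure of the semiprime bound allows one to control the contribution of good semiprimes to $\frac{\e}{(1+\e)^k}\sum_{n \in I_k}\Lambda_2(n)$ by $O(\e^4 k)$, negligible compared to the Selberg main term $\approx 2 \e k$. Hence the primes carry essentially all of the mass, yielding the amplified bound $\sum_{p \in I_k} 1/p \geq (2-O(\e))/(\e^2 k)$ at every $k$ in the range. Summing over $k \in [k_0, k_0 + \e^{-2}]$ and comparing with Mertens' theorem,
\[
\sum_{p \in [(1+\e)^{k_0},\,(1+\e)^{k_0+\e^{-2}+1}]}\frac{1}{p} = \log\!\left(1 + \frac{\e^{-2}}{k_0}\right) + O\!\left(\frac{1}{k_0 \e}\right),
\]
produces the contradiction, since the amplified lower bound gives approximately $(2/\e^2)\log(1+\e^{-2}/k_0)$ and $2/\e^2 \gg 1$ for small $\e$.

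The symmetric case where $B$ equals the entire range is in fact simpler: the failure of the prime bound gives $\sum_{p \in I_k} 1/p < \e/k$ at every $k$, and summing over the range yields a sum strictly less than $\e \log(1+\e^{-2}/k_0)$, whereas Mertens forces the sum to be $\log(1+\e^{-2}/k_0) + O(1/(k_0\e))$, which are incompatible for $\e < 1$ and $k_0$ sufficiently large. The main obstacle is bookkeeping the error terms: the Selberg $O(\e)$ remainder, the $O(\e^4 k)$ bound on the semiprime contribution, and the Mertens remainder $O(1/(k_0 \e))$ must all be dominated by the $2/\e^2 - 1$ gap uniformly in $k_0$, which calls for a short case analysis depending on whether $\e^{-2}/k_0$ is small, comparable to, or larger than $1$.
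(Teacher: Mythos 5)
Your skeleton is the same as the paper's: assume no admissible pair exists, note that if both bounds of Proposition \ref{either} held at one $k$ you could take $k=k'$, use discrete connectedness to force every $k \in [k_0, k_0+\e^{-2}]$ into the same camp, and then sum over the range and play the result off against Mertens' theorem. Your case where the prime bound fails at every scale is exactly the paper's first case. Where you genuinely diverge is the case where the semiprime bound fails at every scale: the paper contradicts Mertens directly through the semiprime side, i.e.\ it uses (implicitly, the details are left to the reader) a Mertens-type lower bound for $\sum 1/(p_1p_2)$ over the whole range with the constraint $p_i \geq \exp(\e^3 k)$, whereas you re-open the Selberg computation from Proposition \ref{either}, observe that the failed semiprime bound makes the semiprime contribution to the $\Lambda_2$-mass of $I_k$ an $O(\e)$-fraction of the total, conclude that the primes must carry essentially the full Selberg mass (doubled density) at every scale in the range, and contradict Mertens for primes. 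This is in fact the route sketched in the paper's introduction (``no semiprimes forces twice as many primes, contradicting a Chebyshev/Mertens upper bound''); it buys you a proof that only ever needs Mertens for primes, avoiding the two-variable Mertens computation with the $p_i \geq \exp(\e^3 k)$ restriction, at the cost of redoing the $\Lambda_2$ bookkeeping inside Proposition \ref{either}.

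One quantitative slip to fix: your amplified bound $\sum_{p \in I_k} 1/p \geq (2-O(\e))/(\e^2 k)$ cannot come out of a correct normalization. Since $\Lambda_2(p) = \log^2 p \approx \e^2 k^2$ on $I_k$ and the Selberg mass of $I_k$, after dividing by $(1+\e)^k$, is $\approx 2k\e^2$, transferring all the mass to primes gives $\sum_{p\in I_k} 1/p \geq (2-O(\e))/k$, i.e.\ twice the Mertens density, not $\e^{-2}$ times it (a bound of size $1/(\e^2 k)$ would already be inconsistent with Mertens for trivial reasons, so it cannot be derivable by a valid computation even under the contradiction hypothesis). The argument survives with the corrected constant because the contradiction only needs $2 > 1$, but then the gap you must use to absorb the error terms is $1-O(\e)$ rather than $2/\e^2 - 1$, so the bookkeeping you flag at the end — dominating the Mertens remainder $O(1/(\e k_0))$ by $(1-O(\e)) \log(1+\e^{-2}/k_0)$ uniformly in $k_0$, via the case split on the size of $\e^{-2}/k_0$ — is genuinely needed, in this case just as in your ``simpler'' case (where, likewise, ``$\e<1$ and $k_0$ large'' is not quite enough: both the main term and the Mertens error scale like $1/k_0$ when $\e^{-2} \leq k_0$, so you need $\e$ small to beat the implied constant, which your closing remark correctly anticipates).
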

	\begin{proof}
		Suppose not. Then by Proposition \ref{either}, for each $k$ in $[k_0, \e^{-2} +k_0]$ either
		\[
		\sum_{p \in I_k} \frac{1}{p} \geq \frac{1}{2 k}
		\]
		or
		\[
		\sum_{ \substack{p_1 p_2 \in I_{k} \\ p_i \geq  \exp(\e^3 k)} } \frac{1}{p_1 p_2} \geq \frac{1}{2 k}.
		\]
		If both hold for some $k$, then by choosing $k = k'$, we could conclude that Proposition \ref{both} holds. Thus, we will assume that exactly one of 
		\[
		\sum_{p \in I_k} \frac{1}{p} \geq \frac{1}{2 k}
		\]
		or
		\[
		\sum_{ \substack{p_1 p_2 \in I_{k} \\ p_i \geq  \exp(\e^3 k)} } \frac{1}{p_1 p_2} \geq \frac{1}{2 k}
		\]
		hold for any choice of $k$.
		Whichever holds for $k_0$ must also hold for $k_0 + 1$ since otherwise we may choose $k = k_0$ and $k' = k_0 +1$. Inductively, we may assume that for every $k$ in $[k_0, \e^{-2} + k_0]$ either
		\[
		\sum_{p \in I_k} \frac{1}{p} < \frac{1}{2 k}
		\]
		or
		\[
		\sum_{ \substack{p_1 p_2 \in I_{k} \\ p_i \geq  \exp(\e^3 k)} } \frac{1}{p_1 p_2} < \frac{1}{2 k}.
		\]	
		Summing in $k$, we eventually obtain a contradiction with Mertens' theorem: either
		\begin{equation}\label{temp1}
		\sum_{ (1+\e)^{k_0} \leq p \leq (1+\e)^{k_0 + \e^{-2}}  }\frac{1}{p} < \frac{1}{10} \cdot \left( \log(k_0 + \e^{-2}) - \log k_0 + O\left( \frac{1}{k_0} \right)    \right) 
		\end{equation}
		or
		\begin{equation}\label{temp2}
		\sum_{k \in [k_0, \e^{-2} + k_0] } \sum_{ \substack{p_1 p_2 \in I_{k} \\ p_i \geq  \exp(\e^3 k)} } \frac{1}{p_1 p_2} < \frac{1}{2} \cdot \left( \log(k_0 + \e^{-2}) - \log k_0 + O\left( \frac{1}{k_0} \right)    \right). 
		\end{equation}
		We remark that a Taylor expansion could simplify 
		\[
		\log(k_0 + \e^{-2}) - \log k_0 + O\left( \frac{1}{k_0} \right) = O\left( \frac{1}{\e^2 k_0} \right). 
 		\]
		Note that Mertens' theorem implies that
		\[
		\sum_{p \leq x} \frac{1}{p} = \log \log x + M + O\left( \frac{1}{\log x} \right),
		\]
		for some constant $M$.
		Taking differences,
		\begin{align*}
		\sum_{ (1+\e)^{k_0} \leq p \leq (1+\e)^{k_0 + \e^{-2}}  }\frac{1}{p} & = \log \log (1+\e)^{k_0 + \e^{-2}} - \log \log (1+\e)^{k_0} + O\left( \frac{1}{k_0 \log (1+\e)} \right) \\
		& = \log(k_0 + \e^{-2}) - \log(k_0) + O\left( \frac{1}{k_0 \log (1+\e)} \right).
		\end{align*}
		But \ref{temp1} says that the sum on the left is $2$ times smaller than that which gives a contradiction.
	\end{proof}
In the next proposition, we show that this implies there are nearby primes and semiprimes which are good.
	\begin{prop}\label{3good}
		Let $\e > 0$. Let $P$ be a natural number which is sufficiently large depending on $\e$. Let $N$ be a natural number which is sufficiently large depending on $P$. Then there exists $p_1$, $p_2$ and $p$ such that 
		\[
		\frac{p_1 p_2}{p} = 1 + O(\e)
		\]
		with $p_1$, $p_2$ and $p$ good in the sense of Definition \ref{defofgood} meaning $p_1, p_2$ and $p$ are not in $S(n)$ for ``most" $n \leq N$ (see Definition \ref{defofgood} for details). Furthermore, we can require that $p_1$, $p_2$ and $p$ are all greater than $\frac{1}{\e}$.
	\end{prop}
	\begin{proof}
		By Proposition \ref{both}, it suffices to show that, for some $k_0$ sufficiently large depending on $\e$ with the property that $(1 + \e)^{k_0 +\e^{-2}} \leq P$ , we have
		\begin{equation}\label{temp3}
		\sum_{ \substack{p \in [(1+\e)^{k_0}, (1+\e)^{\e^{-2} + k_0}] \\ p \text{ bad} }  }\frac{1}{p} \leq \frac{1}{10 k_0}
		\end{equation}
		and that
		\begin{equation}\label{temp4}
		\sum_{ \substack{p_1 p_2 \in [(1+\e)^k_0, (1+\e)^{\e^{-2} + k_0}] \\ p_1 \text{ bad} \\ p_1^{\e^3} \leq p_2 \leq p_1^{\e^{-3}} } } \frac{1}{p_1 p_2} \leq \frac{1}{10 k_0}.
		\end{equation}
		After all, once we have shown this, we can argue as follows: by Proposition \ref{both} there exists an interval of the form $k$ and $k'$ in $[k_0, k_0 + \e^{-2}]$ with $|k - k'| \leq 1$ for which
		\[
		\sum_{p \in [(1+\e)^k, (1+\e)^{k+1} ]} \frac{1}{p} > \frac{1}{k}
		\]
		and
		\[
		\sum_{ \substack{p_1 p_2 \in I_{k'} \\ p_i \geq  \exp(\e^3 k')} } \frac{1}{p_1 p_2} \geq \frac{1}{k'},
		\]
		where $I_k = [ (1 +\e)^k, (1+\e)^{k+1} ]$ and similarly for $I_{k'}$.
		By \ref{temp3}
		\[
		\sum_{ \substack{ p \in [(1+\e)^k, (1+\e)^{k+1} ] \\ p \text{ good} } }\frac{1}{p} > 0
		\]
		and by \ref{temp4}
		\[
		\sum_{ \substack{p_1 p_2 \in I_{k'} \\ p_i \geq  \exp(\e^3 k') \\ p_1, p_2 \text{ good}  } } \frac{1}{p_1 p_2} > 0.
		\]
		Now any good $p$ in $I_k$ and any good $p_1 p_2$ in $I_{k'}$ suffices to prove the result.
		
		Now, for the sake of contradiction, suppose first that
		\[
		\sum_{ \substack{p \in [(1+\e)^k_0, (1+\e)^{\e^{-2} + k_0}] \\ p \text{ bad} }  }\frac{1}{p} \geq \frac{1}{10 k_0}.
		\]
		Summing in $k_0 \leq \log \log P$, for $P$ large enough we get that
		\[
		\sum_{ \substack{p \leq \log N \\ p \text{ bad} } } \frac{1}{p}  \geq \frac{1}{20} \log \log \log P
		\] 
		which contradicts Corollary \ref{notmany}.
		Second, suppose that  
		\[
		\sum_{ \substack{p_1 p_2 \in [(1+\e)^k_0, (1+\e)^{\e^{-2} + k_0}] \\ p_1 \text{ bad} \\ p_1^{\e^3} \leq p_2 \leq p_1^{\e^{-3}} } } \frac{1}{p_1 p_2} \geq \frac{1}{10 k_0}.
		\]
		Summing in $k_0 \leq \log \log P$ gives, for $P$ large enough
		\[
		\sum_{ \substack{p_1 p_2  \leq \log N \\ p_1 \text{ bad} \\ p_1^{\e^3} \leq p_2 \leq p_1^{\e^{-3}} } } \frac{1}{p_1 p_2} \geq \frac{1}{20} \log\log\log P.
		\]
		For each $p_1$, by Mertens' theorem,
		\[
		\sum_{ p_1^{\e^3} \leq p_2 \leq p_1^{\e^{-3}} } \frac{1}{p_2} \leq -10 \log \e.
		\]
		By Corollary \ref{notmany}, this implies
		\[
		\sum_{ \substack{p_1 p_2  \leq \log N \\ p_1 \text{ bad} \\ p_1^{\e^3} \leq p_2 \leq p_1^{\e^{-3}} } } \frac{1}{p_1 p_2} = O\left(\e^{-3} | \log \e | \right) 
		\]
		which yields a contradiction since for $P$ large enough, $\frac{1}{20} \log \log \log P \gg \e^{-3} | \log \e |$.
	\end{proof}
	Finally, we show this implies the prime number theorem.
	\begin{thm}\label{pnt}
		The prime number theorem holds, i.e.
		\[
		\frac{1}{N} \sum_{n \leq N} \Lambda(n) = 1 + o_{N \rightarrow \infty}(1)
		\]
	\end{thm}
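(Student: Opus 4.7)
The plan is to execute the sketch from the introduction, using Proposition \ref{3good} as the main input. First I reduce the prime number theorem to the Mertens-type statement
\[ M(N) := \frac{1}{N}\sum_{n\leq N}\mu(n) = o_{N\to\infty}(1), \]
via the standard Landau equivalence (immediate from $\Lambda = \mu * \log$ and partial summation, as remarked in the introduction). Fix $\e > 0$ small. For $N$ sufficiently large, Proposition \ref{3good} produces good primes $p, p_1, p_2$ with $p_1 p_2/p = 1 + O(\e)$; inspecting the proof of that proposition, I may further assume each of these primes is at least $\e^{-1}$, so that $O(1/p)$ errors appearing below are absorbed into $O(\e)$.

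Next I unpack Definition \ref{defofgood}: for any good prime $q$, the exceptional set $\{n \leq N : q \in S(n)\}$ has $\frac{1}{n}$-weighted density at most $\e$. For $n$ outside this set, the defining inequality of $S(n)$ fails, so
\[ M(n) = \frac{1}{n}\sum_{m\leq n}\mu(m)\,q\,\ch_{q|m} + O(\e). \]
A short calculation using $\mu(qk) = -\mu(k)\ch_{q\nmid k}$ (multiplicativity of $\mu$) and the $O(1/q)$ contribution from the $q^2 \mid m$ terms simplifies the right side to $-M(n/q) + O(\e)$.

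Now I apply this identity three times, with $q$ successively equal to $p$, $p_1$, and (at scale $n/p_1$) $p_2$. On a set of $n \leq N$ of $\frac{1}{n}$-weighted density at least $1 - O(\e)$, obtained by removing the union of the three corresponding exceptional sets (with a short change of variables $m = n/p_1$ to verify that the exceptional set for $p_2$ at the scale $n/p_1$ pulls back to a set of small $\frac{1}{n}$-weight in $[1,N]$), we obtain simultaneously
\[ -M(n/p) \approx M(n) \approx -M(n/p_1) \approx M(n/(p_1 p_2)). \]
Since $p_1 p_2/p = 1 + O(\e)$ and the trivial bound $|\mu| \leq 1$ yields the Lipschitz estimate $|M(a) - M(b)| \lesssim (a-b)/b$ for $b \leq a$ at the same scale, we have $M(n/(p_1 p_2)) = M(n/p) + O(\e)$. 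Substituting gives $-M(n/p) = M(n/p) + O(\e)$, hence $|M(n)| = O(\e)$ for most $n \leq N$.

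The main obstacle is converting this \emph{for most $n$} statement into the pointwise bound $|M(N)| = O(\e)$. The $\frac{1}{n}$-weight of the window $[N(1-\e), N]$ is only $\approx \e$, so the exceptional set (of total $\frac{1}{n}$-weight $O(\e \log N)$) could in principle swallow this window, which is why the pointwise upgrade is not automatic. I would address this by the same log-Cesaro averaging idea used throughout the paper: apply the chain above at every scale $N' \in [N_0, N]$ for which Proposition \ref{3good} provides good triples, integrate against $\frac{dN'}{N'}$, and use the $\frac{1}{n}$-Lipschitz property of $M$ to convert the resulting bound on $\frac{1}{\log N}\sum_{n\leq N}\frac{|M(n)|}{n}$ into the pointwise statement $|M(N)| = O(\e)$ for every sufficiently large $N$. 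Since $\e$ was arbitrary, this yields $M(N) \to 0$, which is the prime number theorem.
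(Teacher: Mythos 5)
Your first half is essentially the paper's argument: reduce to $\frac1N\sum_{n\le N}\mu(n)=o(1)$, take the good triple $p,p_1,p_2$ from Proposition \ref{3good}, and chain the identity ``sampling along multiples of a good prime flips the sign of the average'' three times. Whether one phrases this pointwise for all scales $n$ outside exceptional sets of small $\frac1n$-weight (as you do) or as an $L^1$ bound on $\frac{1}{\ell(N)}\sum_{M\le N}\frac1M\bigl|\frac1M\sum_{n\le M}\mu(n)\bigr|$ (as the paper does) is immaterial; the change of variables you mention for the $p_2$ step is exactly the paper's $O\bigl(\frac{\log p_1}{\log N}\bigr)$ bookkeeping. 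Up to this point the proposal is fine and matches the paper.

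The genuine gap is the last step, and you in fact diagnose the obstruction correctly and then propose a fix that does not address it. After the chaining, all one has is the log-averaged bound $\frac{1}{\ell(N)}\sum_{n\le N}\frac{|M(n)|}{n}=O(\e)$, where $M(n)=\frac1n\sum_{m\le n}\mu(m)$. Averaging over further scales $N'\in[N_0,N]$ against $\frac{dN'}{N'}$ gives nothing new (that average is already contained in the bound at the top scale), and the Lipschitz estimate $|M(a)-M(b)|\lesssim (a-b)/b$ only propagates smallness across a multiplicative window $1+O(\e)$, of $\frac1n$-weight about $\e$, whereas the averaged bound tolerates an exceptional set of weight $O(\e\log N)$. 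So the averaged bound plus Lipschitz continuity is consistent with $|M(N_j)|\ge c$ along a sparse sequence $N_j\to\infty$, i.e.\ it does not imply the prime number theorem, and no amount of re-averaging closes this. The paper needs a genuinely new ingredient here: the identity $\mu\cdot\log=-\mu*\Lambda$, which after switching the order of summation expresses $\sum_{n\le N}\mu(n)\log n$ as a $\Lambda$-weighted combination of the partial sums $\sum_{n\le N/d}\mu(n)$; grouping $d$ into blocks $[a,(1+\e)a)$ and applying the Brun--Titchmarsh (Chebyshev-type) upper bound $\sum_{a\le d<(1+\e)a}\Lambda(d)\le 10\e a$ shows that this pointwise quantity is controlled precisely by the log-averaged quantity $\sum_{M\le N}\frac1M|M(M)|$ already bounded, yielding $\sum_{n\le N}\mu(n)=O(\e N)$ for every large $N$. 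Your proposal contains neither this identity nor any substitute Tauberian ingredient, so as written it proves only that $M(n)$ is small outside a set of scales of small logarithmic density, not the prime number theorem.
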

	\begin{proof}
		Let $\e$ be a positive real number, let $P$ be a natural number which is sufficiently large depending on $\e$ and let $N$ be a natural number sufficiently large depending on $P$.
		By Proposition \ref{3good}, there exist primes $p_1$, $p_2$ and $p$ all good and greater than $\frac{1}{\e}$ such that
		\[
		\frac{p_1 p_2}{p} = 1 + O(\e).
		\]	
		By definition of a good prime, 
		\[
		\frac{1}{M} \left| \sum_{n \leq M} \mu(n) - \sum_{n \leq M} \mu(n) p \ch_{p | n} \right| \geq \e,
		\]
		for at most a small set of $M$ (exactly how small will be spelled out shortly). In particular, let $S(M)$ denote the set of primes such that 
		\[
		\frac{1}{M} \left| \sum_{n \leq M} \mu(n) - \sum_{n \leq M} \mu(n) p \ch_{p | n} \right| \geq \e.
		\]
		Then by definition of a good prime,
		\[
		\frac{1}{\ell(N)} \sum_{M \leq N} \frac{1}{M} \ch_{p_1 \in S(M)} \ch_{p_2 \in S(M)} \ch_{p \in S(M)} = O(\e).
		\]
		Thus, we may conclude that
		\[
		\frac{1}{\ell(N)}  \sum_{M \leq N} \frac{1}{M} \frac{1}{M} \left| \sum_{n \leq M} \mu(n) - \sum_{n \leq M} \mu(n) p \ch_{p | n} \right| = O(\e).
		\]
		Since $\mu(np) = - \mu(n)$ for most $n$ (including all but those $O(\frac{1}{p}) = O(\e)$ fraction of $n$ which are not divisible by $p$), we conclude that 
		\[
		\frac{1}{\ell(N)} \sum_{M \leq N} \frac{1}{M}  \left| \frac{1}{M} \sum_{n \leq M} \mu(n) + \frac{p}{M} \sum_{n \leq M/p} \mu(n) \right| = O(\e).
		\]
		Similarly, since $p_1$ is good,
		\[
		\frac{1}{\ell(N)} \sum_{M \leq N} \frac{1}{M} \left|  \frac{1}{M} \sum_{n \leq M} \mu(n) +  \frac{p_1}{M} \sum_{n \leq M/p_1} \mu(n) \right| = O(\e).
		\]	
		By change of variables,
		\[
		\frac{1}{\ell(N)} \sum_{M \leq N} \frac{1}{M} \left| \frac{p_1}{M} \sum_{n \leq M/p_1} \mu(n) + \frac{p_1 p_2}{M} \sum_{n \leq M/p_1 p_2} \mu(n) \right| = O(\e) + O\left( \frac{\log p_1}{\log N} \right).
		\]
		By the triangle inequality and since $N$ is much larger than $p_1$,
		\[
		\frac{1}{\ell(N)} \sum_{M \leq N} \frac{1}{M} \left| \frac{p}{M} \sum_{n \leq M/p} \mu(n) + \frac{p_1 p_2}{M} \sum_{n \leq M/p_1 p_2} \mu(n) \right| = O(\e).
		\]
		But since $\frac{p_1 p_2}{p} = 1 + O(\e)$, 
		\[
		\frac{1}{\ell(N)} \sum_{M \leq N} \frac{1}{M} \left| \frac{p}{M} \sum_{n \leq M/p} \mu(n) \right| = O(\e).
		\]
		and therefore, again using that $p$ is good,
		\[
		\frac{1}{\ell(N)} \sum_{M \leq N} \frac{1}{M} \left| \frac{1}{M} \sum_{n \leq M} \mu(n) \right| = O(\e).
		\]
		This is an averaged version on the equation we want. We want that
		\[
		\left| \frac{1}{N} \sum_{n \leq N} \mu(n) \right| = O(\e),
		\]
		for all $N$ sufficiently large. Thus, we just need to prove
		\begin{lem}
		Let $\e > 0$, let $N$ be sufficiently large depending on $\e$ and suppose that
		\[
		\frac{1}{\ell(N)} \sum_{M \leq N} \frac{1}{M} \left| \frac{1}{M} \sum_{n \leq M} \mu(n) \right| = O(\e).
		\]
		Then
		\[
		\left| \frac{1}{N} \sum_{n \leq N} \mu(n) \right| = O(\e),
		\]
		\end{lem}
		To prove this we use the identity
		\[
		\mu \cdot \log = - \mu * \Lambda.
		\]
		Summing both sides up to $N$ gives
		\[
		\sum_{n \leq N} \mu(n) \log n = - \sum_{n \leq N} \sum_{d | n} \mu\left( \frac{n}{d} \right) \Lambda(d).
		\]
		Now by switching the order of summation
		\[
		= - \sum_{d \leq N} \Lambda(d) \left( \sum_{n \leq N/d} \mu(n) \right).
		\]
		If it were not for the factor of $\Lambda(d)$, this would be exactly what we want. Each $\sum_{n \leq M } \mu(n)$ for an integer $M$ occurs in this sum the number of times that $\left\lfloor \frac{N}{d} \right\rfloor = M$ where $\lfloor \cdot \rfloor$ denotes the floor which is proportional to $\frac{N}{M^2}$. The factor of $\Lambda(d)$ can be removed using the Brun-Titchmarsh inequality as follows. First, we break up the sum into different scales
		\begin{align*}
		=& - \sum_{a \in (1+\e)^\N } \sum_{\substack{d \leq N \\ a \leq d < (1+\e) a } } \Lambda(d) \left( \sum_{n \leq N/d} \mu(n) \right).
		\intertext{For all $d$ between $a$ and $(1+\e)a$, the sums $\sum_{n \leq N/d} \mu(n)$ all give roughly the same value. Therefore}
		=& - \sum_{a \in (1+\e)^\N } \sum_{\substack{d \leq N \\ a \leq d < (1+\e) a } } \Lambda(d) \left( \sum_{n \leq N/a} \mu(n) \right) \\
		 + & O\left( \sum_{\substack{ a \in (1+\e)^\N \\ a \leq N  } } \sum_{ a \leq d < (1+\e) a } \Lambda(d) \sum_{ \frac{N}{a(1+\e)} \leq n \leq \frac{N}{a} } 1 \right) 
		\intertext{ 
		First, we focus on the error term.
		}	
		& O\left( \sum_{ \substack{ a \in (1+\e)^\N \\ a \leq N  } } \sum_{ a \leq d < (1+\e) a } \Lambda(d) \sum_{ \frac{N}{a(1+\e)} \leq n \leq \frac{N}{a} } 1 \right) \\
		= & O\left( \sum_{\substack{ a \in (1+\e)^\N \\ a \leq N  } } \sum_{ a \leq d < (1+\e) a } \Lambda(d)  \left(\frac{N}{a} - \frac{N}{a(1+\e)}\right) \right)	 \\
		= & O\left( \sum_{\substack{ a \in (1+\e)^\N \\ a \leq N  } } \sum_{ a \leq d < (1+\e) a } \Lambda(d)  \frac{N \e}{a (1+\e)} \right)
		\intertext{	
		By the Brun-Titchmarsh inequality	
		}
		= & O\left( \sum_{\substack{ a \in (1+\e)^\N \\ a \leq N  }} \e a \frac{N \e}{a (1+\e)} \right) \\
		= & O\left( \sum_{\substack{ a \in (1+\e)^\N \\ a \leq N  } } N \e^2 \right) \\
		= & O\left( N \e^2 \log_{1+\e} N \right) \\
		= & O\left( \e N  \log N \right)
		\intertext{
		where the last step involves Taylor expanding $\log(1 + \e)$ near $\e = 0$. Next, we turn our attention to the main term. We begin by pulling out the sum over $\mu(n)$ which no longer depends on $d$.
		}
		& - \sum_{a \in (1+\e)^\N } \sum_{\substack{d \leq N \\ a \leq d < (1+\e) a } } \Lambda(d) \left( \sum_{n \leq N/a} \mu(n) \right) \\
		=& - \sum_{a \in (1+\e)^\N } \left( \sum_{n \leq N/a} \mu(n) \right) \left( \sum_{\substack{d \leq N \\ a \leq d < (1+\e) a } } \Lambda(d) \right).
		\intertext{By the Brun-Titchmarsh inequality, this is bounded in absolute value by}
		\leq&  \sum_{\substack{a \in (1+\e)^\N \\ a \leq N} } \left| \sum_{n \leq N/a} \mu(n) \right| \cdot \left( 10 \e a \right).
		\intertext{	
		Earlier, we replaced a sum indexed by $n \leq N/d$ by a sum indexed by $n \leq N/a$, showing these two sums were close up to an error of size $O(\e N \log N)$. Undoing this process, we find
		}
		= & 10 \sum_{d \leq N} \left| \sum_{n \leq N/d} \mu(n) \right| + O(\e N \log N).
		\intertext{Now we let $M = \frac{N}{d}$. The number of values of $d$ such that $\left\lfloor \frac{N}{d} \right\rfloor$ is the number of values of $d$ such that $M \leq \frac{N}{d} < M + 1$ and therefore $\frac{N}{M+1} < d \leq \frac{N}{M}$. The number of such $d$'s is bounded by $\frac{N}{M} - \frac{N}{M+1} = \frac{N}{M(M+1)}$. Thus  }
		\leq& 10 \sum_{M \leq N} \frac{N}{M^2}\left| \sum_{n \leq M} \mu(n) \right| + O(\e N \log N).
		\intertext{But we already showed that this sum is bounded by }
		=& O(\e N \ell(N) ) \\ =& O( \e N \log N ).
		\end{align*}
		Thus,
		\[
		\sum_{n \leq N} \mu(n) \log(n) = O(\e N \log N).
		\]
		Since $\log n = \log N (1 + O(\e))$ for $n$ between $\e \frac{N}{\log N}$ and $N$ and $\e$ sufficiently small we conclude that 
		\[
		\sum_{n \leq N} \mu(n) = O(\e N).
		\]
		But this classically implies the prime number theorem.
	\end{proof}

	\section{In what ways is this a dynamical proof?}
	
	To begin the argument, we showed that for all $N$, for most $p$ i.e. all $p$ outside a bad set where
	\[
	\sum_{p \text{ bad}} \frac{1}{p} \leq C_\e
	\]
	we have that
	\[
	\sum_{n \leq N} \mu(n) = \sum_{n \leq N} \mu(n) p \ch_{p | n} + O(\e).
	\]
	We did this using an $L^2$ orthogonality argument (Propositions \ref{pregood} and \ref{good}). Alternately, we can argue using a variant of Tao's entropy decrement argument (the first version of this argument appeared in \cite{MR3569059}; a different version of the entropy decrement argument appeared in \cite{TT1} and \cite{TT2}; the version presented here is somewhat different from what appeared in those papers). Let $\n$ be a random integer less than $N$. Let $\x_i = \mu(\n + i)$ and let $\y_p = \n \mod p$. 
	In probability and dynamics, a stochastic process is a sequence of random variables  $(\ldots, \xi_{-2}, \xi_{-1}, \xi_0, \xi_1, \xi_2, \ldots)$ such that
	\[
	\P ((\xi_1, \ldots \xi_k) \in A) = \P ((\xi_{1+m}, \ldots \xi_{k+m}) \in A)
	\]
	for any set $A$ and for any $m$.
	In our setting $(\ldots, \x_{-2}, \x_{-1}, \x_0, \x_1, \x_2, \ldots)$ is approximately stationary in the sense that
	\[
	\P ((\x_1, \ldots \x_k) \in A) \approx \P ((\x_{1+m}, \ldots \x_{k+m}) \in A)
	\]
	where the two terms differ by some small error which is $o_{N \rightarrow \infty, m}(1)$. A stationary process is the same as a random variable in a measure preserving system where $\xi_{i+1}$ is the transformation applied to $\xi_i$. A key invariant of a stationary process is thus the Kolmogorov-Sinai entropy:
	\[
	h(\xi) = \lim_{n \rightarrow \infty} \frac{1}{n} H(\xi_1, \ldots, \xi_n)
	\]
	where
	\[
	H(\xi_1, \ldots, \xi_n)
	\]
	is the Shannon entropy of $(\xi_1, \ldots, \xi_n)$.
	This limit exists because 
	\begin{align*}
	\frac{1}{n} H(\xi_1, \ldots, \xi_n) =& \frac{1}{n} \sum_{i \leq n} H(\xi_i | \xi_1 \ldots, \xi_{i-1})
	\intertext{
		by the chain rule for entropy, which is equal to 
	}
	=& \frac{1}{n} \sum_{i \leq n} H(\xi_0 | \xi_{-1} \ldots, \xi_{-i+1})
	\end{align*}
	by stationarity. This is a Caesar\'o average of a decreasing sequence which is therefore decreasing. Since entropy is nonnegative, we can conclude that the limit exists. In our case, because $(\ldots, \x_{-1}, \x_0, \x_1, \ldots)$ is almost stationary, we can conclude that
	\[
	\frac{1}{n} H(\x_1, \ldots, \x_n)
	\]
	is almost decreasing in the sense that, for $m > n$,
	\[
	\frac{1}{m} H(\x_1, \ldots, \x_m) \leq \frac{1}{n} H(\x_1, \ldots, \x_n) + o_{N \rightarrow \infty, n}(1).
	\]
	The same is true for the relative entropy
	\[
	\frac{1}{n} H(\x_1, \ldots, \x_n | \y_{p_1}, \ldots, \y_{p_k})
	\]
	for any fixed set of primes $p_1, \ldots, p_k$. 
	
	We define the mutual information between two random variables $\x$ and $\y$ as
	\[
	I(\x; \y) = H(\x) - H(\x | \y)
	\]
	and more generally the conditional mutual information
	\[
	I(\x; \y | \z) = H(\x | \z) - H(\x | \y, \z).
	\]
	We assume for the rest of the explanation that all random variables take only finitely many values.
	Mutual information measures how close two random variables are to independent. Two random variables $\x$ and $\y$ are independent if and only if 
	\[
	I(\x; \y) = 0.
	\]
	Intuitively, we think of $\x$ and $\y$ as close to independent if the mutual information is small. The crux of the entropy decrement argument is that we can find primes $p$ such that $(\x_1, \ldots, \x_p)$ is close to independent of $\y_p$. The argument is as follows. Let $p_1 < p_2 < \ldots < p_k$ be a sequence of primes. Consider the relative entropy
	\begin{align*}
	& \frac{1}{p_k} H(\x_1, \ldots, \x_{p_k} | \y_{p_1}, \ldots, \y_{p_k}) \\
	= & \frac{1}{p_k} H(\x_1, \ldots, \x_{p_{k}} | \y_{p_1}, \ldots, \y_{p_{k-1}}) - \frac{1}{p_k} I(\x_1, \ldots, \x_{p_{k}}; \y_{p_{k}} | \y_{p_1}, \ldots, \y_{p_{k-1}} )
	\intertext{
		and because the relative entropy is almost decreasing
	}
	= & \frac{1}{p_{k-1}} H(\x_1, \ldots, \x_{p_{k-1}} | \y_{p_1}, \ldots, \y_{p_{k-1}}) - \frac{1}{p_k} I(\x_1, \ldots, \x_{p_{k}}; \y_{p_{k}} | \y_{p_1}, \ldots, \y_{p_{k-1}} ) + o(1).
	\intertext{
		Inductively, we find
	}
	\leq & H(\x_1) - \sum_{j \leq k} \frac{1}{p_j} I(\x_1, \ldots, \x_{p_{j}}; \y_{p_{j}} | \y_{p_1}, \ldots, \y_{p_{j-1}} ) + o(1)
	\end{align*}
	We conclude that the set of bad primes $p_j$ for which
	\[
	I(\x_1, \ldots, \x_{p_{j}}; \y_{p_{j}} | \y_{p_1}, \ldots, \y_{p_{j-1}} ) \geq \e
	\]
	satisfies
	\[
	\sum_{p_j \text{ bad}} \frac{1}{p_j} \leq \e^{-1} H(\x_1) + o(1) < \infty.
	\]
	Thus, for most primes, 
	\[
	I(\x_1, \ldots, \x_{p_{j}}; \y_{p_{j}} | \y_{p_1}, \ldots, \y_{p_{j-1}} ) < \e.
	\]
	In a slight abuse of terminology, we say such primes are good.
	Although this definition is apparently different from Definition \ref{defofgood}, we will show that this notion of good meaning small mutual information essentially implies the ``random sampling" version defined in Definition \ref{defofgood}.
	
	Intuitively, if $p$ is good then $\x_1, \ldots, \x_p$ and $\y_p$ are nearly independent. This is formalized by Pinsker's inequality. Pinsker's inequality states that
	\[
	d_{TV}(\x, \y) \leq D(\x || \y)^{1/2}
	\]
	where $d_{TV}$ is the total variation distance and $D$ is the Kullback-Leibler divergence. For our purposes, the important thing about the Kullback-Liebler divergence is that if $\y'$ is a random variable with the same distribution as $\y$ which is independent of $\x$ then 
	\[
	D( (\x,\y) || (\x, \y')  ) = I(\x; \y).
	\]
	Therefore, we conclude that
	\[
	d_{TV}( (\x, \y), (\x, \y') ) \leq  I(\x; \y)^{1/2}.
	\]
	Similarly, there is a relative version
	\[
	d_{TV}( (\x, \y, \z), (\x, \y', \z) ) \leq  I(\x; \y | \z)^{1/2},
	\]
	where now $\y'$ has the same distribution as $\y$ but is relatively independent of $\x$ over $\z$ meaning that  
	\[
	\P(\x \in A, \y \in B | \z = c) = \P(\x \in A | \z =c) \P( \y \in B | \z =c).
	\]
	Thus, for bounded function $F$,
	\[
	\E F(\x, \y, \z) = \E F(\x, \y', \z) + O(I(\x; \y)^{1/2}),
	\]
	where again $\y'$ is relatively independent of $\x$ over $\z$ and $\E$ denotes the expectation.
	In our case, for a good prime $p$ where
	\[
	I(\x_1, \ldots, \x_{p}; \y_{p} | (y_q)_{q < p} ) < \e
	\]
	we note that
	\[
	\E F(\x_1, \ldots, \x_p, \y_p) = \E F(\x_1,\ldots \x_p, \y_p') + O(\e^{1/2}).
	\]
	for any bounded function $F$ where $\y_p'$ is relatively independent of $(\x_1, \ldots, \x_p)$ over $(\y_q)_{q < p}$.
	Since  $\y_p$ and $(\y_q)_{q < p}$ are already very nearly independent by the Chinese remainder theorem (and in fact if $N$ is a multiple of the product of primes less than $p$, then $\y_p$ and $(\y_q)_{q < p}$ are genuinely independent) we can conclude that
	\[
	\E F(\x_1, \ldots, \x_p, \y_p) = \E F(\x_1,\ldots \x_p, \y_p') + O(\e^{1/2}),
	\]
	where now $\y_p'$ is genuinely independent of $(\x_1, \ldots, \x_p)$.
	For example, if we want to evaluate 
	\[
	\frac{1}{N} \sum_{n \leq N} \mu(n)
	\]
	we could interpret this as
	\[
	\E F(\x_0)
	\]
	where $F(x) = x$. Alternately, we can average
	\[
	\frac{1}{N} \sum_{n \leq N} \mu(n) \approx \frac{1}{p} \sum_{i \leq p} \mu(n + i),
	\]
	which is 
	\[
	\E F(\x_1, \ldots, \x_p)
	\]
	where now $F(x_1, \ldots, x_p) = \frac{1}{p} \sum_{i \leq p} x_i$. Now let $\y_p'$ as before be independent of $(\x_1, \ldots, \x_p)$ and uniformly distributed among residue classes mod $p$. Then this is also
	\[
	\E F(\x_1, \ldots, \x_p, \y_p')
	\]
	where 
	\[
	F(x_1, \ldots, x_p, y_p) = \frac{1}{p} \sum_{i \leq p} x_i p \ch_{y_p = -i}.
	\]
	As we noted, for $p$ a good prime, this is approximately,
	\[
	\E F(\x_1, \ldots, \x_p, \y_p') \approx \E F(\x_1, \ldots, \x_p, \y_p)
	\]
	and unpacking definitions this is
	\begin{align*}
	\E F(\x_1, \ldots, \x_p, \y_p) = &\frac{1}{N} \sum_{n \leq N} \frac{1}{p} \sum_{i \leq p} \mu(n + i) p \ch_{n = -1 \mod p}.
	\intertext{
		 Undoing the averaging in $i$ gives
	}
	\approx &\frac{p}{N} \sum_{n \leq N} \mu(n) \ch_{p | n}.
	\end{align*}
	Thus, the analogue of Corollary \ref{notmany} can be proved using the entropy decrement argument, which can be interpreted in the dynamical setting.
	
	The rest of the proof can also be translated to the dynamical setting. The Furstenberg system corresponding to the M\"obius function can be constructed as follows. The underlying space is the set of functions from $\Z$ to $\{-1, 0, 1\}$. We construct a random variable on this space. Consider a random shift of the M\"obius function. Formally, let $\n$ be a uniformly chosen random integer between 1 and $N$ and let $\X_N$ denote the function $\mu$ (say extended by $0$ to the left) shifted by $\n$ i.e. $\X_N(i) = \mu(i + \n)$. Since the underlying space of functions from $\Z$ to $\{-1, 0, 1\}$ is compact, there is a subsequence of $(\X_N)_N$ which converges weakly to a random variable $\X$. Since the distribution of each random variable $\X_N$ is ``approximately" shift invariant, the distribution of the limit $\X$ is actually shift invariant. Thus, we obtain a shift invariant measure $\nu$ on the space of functions from $\Z$ to $\{-1, 0 , 1\}$ with the property that if $f$ is the ``evaluation at zero" map 
	\[
	f( (a_n)_{n \in \Z} ) = a_0
	\]
	then 
	\[
	\int f(x) \nu(dx) = \E f(\X)
	\]
	is a subsequential limit of terms of the form
	\[
	\frac{1}{N} \sum_{n \leq N} \mu(n).
	\]
	Thus, we can encode questions about the average of $\mu$ or more generally shifts like $\mu(n) \mu(n+1)$ in a dynamical way.
	
	In order to take advantage of the fact that $\mu$ is multiplicative, we need to impose extra structure on the dynamical systems we associate to $\mu$. This extra structure is implicit in \cite{TT1} and \cite{TT2} and is explicitly described first in \cite{taoblog}. See also \cite{Sawin} and \cite{me}. One key feature of multiplicative functions is that they are statistically multiplicative in the sense that for any $\epsilon_1, \ldots, \epsilon_k$ in $\{-1, 0,1 \}$,
	\begin{align*}
	&\frac{p}{N} \# \{ n \leq N \colon \mu(n + pi) = \epsilon_i \text{ for all $i$ and } p | n \} \\ = &\frac{p}{N} \# \{ n \leq N/p \colon \mu(n + i) = -\epsilon_i \text{ for all $i$} \} + O\left( \frac{1}{p} \right).
	\end{align*}
	(This holds simply by changing variables and using that $\mu$ is  multiplicative).
	For $N$ in some subsequence, we can think of the right hand side as
	\[
	\frac{p}{N} \# \{ n \leq N/p \colon \mu(n + pi) = \epsilon_i \text{ for all $i$} \} \approx \nu \{ x \colon f(T^{i p} x) = \epsilon_i \}.
	\]
	We would like a way of encoding this identity in our dynamical system. One solution is to use logarithmic averaging. Now let $\n$ denote a random integer between $1$ and $N$ which is not uniformly distributed but which is logarithmically distributed meaning the probability that $\n  = m$ is proportional to $\frac{1}{m}$ for $m \leq N$. Let $\X_N(i) = \mu(n + i)$ be a random translate of the M\"obius function. Consider the pair $(\X_N, \n)$ in the space of pairs of functions from $\Z$ to $\{-1, 0, 1\}$ and profinite integers. This product space is compact so there is a weak limit $(\X, \y)$ where $\X$ is a functions from $\Z$ to $\{-1, 0, 1\}$ and $\y$ is a profinite integer. Let $T(x, y) = (n \mapsto x(n+1), y + 1)$. Let $\rho$ be the distribution of $(\X, \y)$ which is a $T$-invariant measure on our space. Consider the map $I_p$ on pairs of functions and profinite integers which are $0 \mod p$ which dilates the function by $p$, multiplies the function by $-1$ and divides the profinite integer by $p$ i.e.
	\[
	I_p (x, y) = (n \mapsto -x(pn), y/p ).
	\]
	For a point $(x, y)$ in our space, let $M$ denote the projection onto the second factor
	\[
	M(x, y) = y.
	\]
	Let $f$ be the ``evaluation of the function at 0" function i.e.
	\[
	f(x,y) = x(0).
	\]
	Then the dynamical system has the following properties, where $x$ is always a function from $\Z$ to $\{-1, 0 , 1\}$, $p$ and $q$ are primes and $y$ is a profinite integer:
	\begin{enumerate}
		\item For all $p$, for all $x$ and $y$ such that $M(x,y) = 0 \mod p$,
		\[
		I_p( T^p(x, y)  ) = T(I_p(x, y)).
		\]
		\item For all $p$ and $q$, for all $x$ and $y$ where $M(x,y)$ is $0 \mod p q$, we have
		\[
		I_p (I_q (x, y) ) = I_q (I_p (x,y)).
		\]
		\item For all $p$, and for all measurable functions on our space $\phi$,
		\[
		\int \phi(x, y) \rho(dx dy) = \int p \ch_{ M(x,y) = 0 \mod p } \phi( I_p(x, y) ) \rho(dx dy) + O\left( \frac{1}{p} \right).
		\] 
		\item For all $p$ and for all $x$ and $y$ such that $M(x,y) = 0 \mod p$   we have that
		\[
		f(I_p(x,y)) = -f(x,y).
		\]
	\end{enumerate}
	A tuple $(X, \rho, T, f, M, (I_p)_{p})$ where $(X, \rho, T)$ is a measure preserving system and satisfying $(1)$ through $(4)$ is a called a dynamical model for $\mu$. Translating our argument over to the dynamical context, there exists some $p$ such that
	\[
	\int f(x,y) \rho(dx dy) \approx \int f(x, y) \cdot p \ch_{M(x,y) = 0 \mod p },
	\]
	with an error term which we may make arbitrarily small by increasing $p$.
	On the other hand,
	\begin{align*}
	\int f(x, y) \cdot p \ch_{M(x,y) = 0 \mod p} = & \int -f( I_p(x, y)) \cdot p \ch_{M(x,y) = 0 \mod p} \\
	& = -\int f(x, y).
	\end{align*}
	We conclude that
	\[
	\int f = 0,
	\]
	for any dynamical model for $\mu$. 
	
	In \cite{taoblog}, Tao  constructs a dynamical model where
	\[
	\int f \approx \frac{1}{\log N} \sum_{n \leq N} \frac{1}{n} \mu(n)
	\]
	i.e. using logarithmic averaging and the Furstenberg correspondence principle.
	However using either Corollary \ref{notmany} or a version of the entropy decrement argument, we can argue as follows.
	Let $\rho_N$ denote the distribution of $(\X_N, \n)$ in the space of pairs of functions $\Z \rightarrow \{-1,0,1\}$ and profinite integers and where $\n$ is a uniformly distributed random integer between $1$ and $N$ and $\X_N(i) = \mu(\n + i)$. For any $\epsilon$ in $S^1$ and $\phi$, define $\epsilon_* \rho_n$ by 
	\[
	\int  \phi(x,y) \epsilon_* \rho_N(dx dy) = \int  \phi(\epsilon \cdot x,y) \rho_N(dx dy).
	\]
	Choose $\epsilon_N$ so that 
	\[
	\nu_m = \left( \sum_{n \leq m} \frac{1}{n} \right)^{-1}  \sum_{N \leq m} \frac{1}{N} (\epsilon_N)_*  \rho_N,
	\]
	satisfies
	\[
	\int f(x,y) \nu_m(x,y) = \left( \sum_{n \leq m} \frac{1}{n} \right)^{-1} \sum_{N \leq M} \frac{1}{N} \left| \frac{1}{N} \sum_{n \leq N} \mu(n) \right|,
	\]
	i.e. $\epsilon_N$ is the sign of $\sum_{n \leq N} \mu(n)$. Using a version of Corollary $\ref{notmany}$ or the entropy decrement argument, one can prove that for most $p$ (except for a set of logarithmic size at most a constant depending on $\e$), 
	\[
	(I_p)_* ( p \ch_{M = 0 \mod p} \ \nu_m) \approx \nu_m + O\left( \e +\frac{\log p}{\log m} \right).
	\]
	By the argument from before (see the proof of Theorem \ref{pnt}), this is enough to conclude the prime number theorem.

\begin{bibdiv}
	\begin{biblist}
		
		\bib{MR286773}{article}{
			author={Bombieri, E.},
			title={A note on the large sieve},
			date={1971},
			ISSN={0065-1036},
			journal={Acta Arith.},
			volume={18},
			pages={401\ndash 404},
			eprint={https://doi.org/10.4064/aa-18-1-401-404},
			review={\MR{286773}},
		}
		
		\bib{Daboussi}{article}{
			author={Daboussi, H.},
			title={On the prime number theorem for arithmetic progressions},
			date={1989},
			ISSN={0022-314X},
			journal={J. Number Theory},
			volume={31},
			number={3},
			pages={243\ndash 254},
			eprint={https://doi.org/10.1016/0022-314X(89)90071-1},
			review={\MR{993901}},
		}
		
		\bib{Erdos}{article}{
			author={Erd\"{o}s, P.},
			title={On a {T}auberian theorem connected with the new proof of the
				prime number theorem},
			date={1949},
			ISSN={0019-5839},
			journal={J. Indian Math. Soc. (N.S.)},
			volume={13},
			pages={131\ndash 144},
			review={\MR{33309}},
		}
		
		\bib{EW}{book}{
			author={Einsiedler, Manfred},
			author={Ward, Thomas},
			title={Functional analysis, spectral theory, and applications},
			series={Graduate Texts in Mathematics},
			publisher={Springer, Cham},
			date={2017},
			volume={276},
			ISBN={978-3-319-58539-0; 978-3-319-58540-6},
			review={\MR{3729416}},
		}
		
		\bib{GHS}{article}{
			author={Granville, Andrew},
			author={Harper, Adam~J.},
			author={Soundararajan, K.},
			title={A new proof of {H}al\'{a}sz's theorem, and its consequences},
			date={2019},
			ISSN={0010-437X},
			journal={Compos. Math.},
			volume={155},
			number={1},
			pages={126\ndash 163},
			eprint={https://doi.org/10.1112/s0010437x18007522},
			review={\MR{3880027}},
		}
		
		\bib{Goldfeld}{incollection}{
			author={Goldfeld, D.},
			title={The elementary proof of the prime number theorem: an historical
				perspective},
			date={2004},
			booktitle={Number theory ({N}ew {Y}ork, 2003)},
			publisher={Springer, New York},
			pages={179\ndash 192},
			review={\MR{2044518}},
		}
		
		\bib{Goldstein}{article}{
			author={Goldstein, L.~J.},
			title={Correction to: ``{A} history of the prime number theorem''
				({A}mer. {M}ath. {M}onthly {\bf 80} (1973), 599--615)},
			date={1973},
			ISSN={0002-9890},
			journal={Amer. Math. Monthly},
			volume={80},
			pages={1115},
			eprint={https://doi.org/10.2307/2318546},
			review={\MR{330016}},
		}
		
		\bib{Hildebrand}{article}{
			author={Hildebrand, Adolf},
			title={The prime number theorem via the large sieve},
			date={1986},
			ISSN={0025-5793},
			journal={Mathematika},
			volume={33},
			number={1},
			pages={23\ndash 30},
			eprint={https://doi.org/10.1112/S002557930001384X},
			review={\MR{859495}},
		}
		
		\bib{MR0160745}{book}{
			author={Kubilius, J.},
			title={Probabilistic methods in the theory of numbers},
			series={Translations of Mathematical Monographs, Vol. 11},
			publisher={American Mathematical Society, Providence, R.I.},
			date={1964},
			review={\MR{0160745}},
		}
		
		\bib{me}{article}{
			author={McNamara, Redmond},
			title={Sarnak's conjecture for sequences of almost quadratic word
				growth},
			eprint={https://arxiv.org/abs/1901.06460},
		}
		
		\bib{florian}{article}{
			author={Richter, Florian},
			title={A new elementary proof of the prime number theorem},
			eprint={https://arxiv.org/abs/2002.03255},
		}
		
		\bib{Sawin}{article}{
			author={Sawin, Will},
			title={Dynamical models for liouville and obstructions to further
				progress on sign patterns},
			eprint={https://arxiv.org/abs/1809.03280},
		}
		
		\bib{Selberg}{article}{
			author={Selberg, Atle},
			title={An elementary proof of the prime-number theorem for arithmetic
				progressions},
			date={1950},
			ISSN={0008-414X},
			journal={Canad. J. Math.},
			volume={2},
			pages={66\ndash 78},
			eprint={https://doi.org/10.4153/cjm-1950-007-5},
			review={\MR{33306}},
		}
		
		\bib{taoblog2}{article}{
			author={Tao, Terence},
			title={254a, notes 9 – second moment and entropy methods},
		}
		
		\bib{taoblog3}{article}{
			author={Tao, Terence},
			title={A banach algebra proof of the prime number theorem},
			eprint={https://terrytao.wordpress.com/2014/10/25/a-banach-algebra-proof-of-the-prime-number-theorem/},
		}
		
		\bib{taoblog}{article}{
			author={Tao, Terence},
			title={Furstenberg limits of the liouville function},
			eprint={https://terrytao.wordpress.com/2017/03/05/furstenberg-limits-of-the-liouville-function/},
		}
	
			\bib{MR3569059}{article}{
			AUTHOR = {Tao, Terence},
			TITLE = {The logarithmically averaged {C}howla and {E}lliott
				conjectures for two-point correlations},
			JOURNAL = {Forum Math. Pi},
			FJOURNAL = {Forum of Mathematics. Pi},
			VOLUME = {4},
			YEAR = {2016},
			PAGES = {e8, 36},
			MRCLASS = {11N37},
			MRNUMBER = {3569059},
			MRREVIEWER = {Y.-F. S. P\'{e}termann},
			DOI = {10.1017/fmp.2016.6},
			URL = {https://doi.org/10.1017/fmp.2016.6},
		}
		
		\bib{TT1}{article}{
			author={Tao, Terence},
			author={Ter\"{a}v\"{a}inen, Joni},
			title={Odd order cases of the logarithmically averaged {C}howla
				conjecture},
			date={2018},
			ISSN={1246-7405},
			journal={J. Th\'{e}or. Nombres Bordeaux},
			volume={30},
			number={3},
			pages={997\ndash 1015},
			eprint={http://jtnb.cedram.org/item?id=JTNB_2018__30_3_997_0},
			review={\MR{3938639}},
		}
		
		\bib{TT2}{article}{
			author={Tao, Terence},
			author={Ter\"{a}v\"{a}inen, Joni},
			title={The structure of correlations of multiplicative functions at
				almost all scales, with applications to the {C}howla and {E}lliott
				conjectures},
			date={2019},
			ISSN={1937-0652},
			journal={Algebra Number Theory},
			volume={13},
			number={9},
			pages={2103\ndash 2150},
			eprint={https://doi.org/10.2140/ant.2019.13.2103},
			review={\MR{4039498}},
		}
		
		\bib{zagier}{article}{
			author={Zagier, D.},
			title={Newman's short proof of the prime number theorem},
			date={1997},
			ISSN={0002-9890},
			journal={Amer. Math. Monthly},
			volume={104},
			number={8},
			pages={705\ndash 708},
			eprint={https://doi.org/10.2307/2975232},
			review={\MR{1476753}},
		}

	\end{biblist}
\end{bibdiv}

\end{document}